\documentclass[12pt]{article}
\usepackage{graphicx}
\usepackage{amsthm}
\usepackage{amssymb}
\usepackage{amsmath}
\usepackage{hyperref}
\usepackage{float}
\usepackage{graphicx}
\usepackage{subcaption}
\usepackage{ mathrsfs }
\usepackage[font=small,labelfont=bf]{caption}
\graphicspath{ {./images/} }
\usepackage{tikz}

\usepackage[left=30mm,right=30mm,top=20mm,bottom=20mm]{geometry}

\usepackage[maxbibnames=99]{biblatex}
\usepackage{comment}

\theoremstyle{plain}
\newtheorem{theorem}{Theorem}[section]
\newtheorem{corollary}[theorem]{Corollary}
\newtheorem{lemma}[theorem]{Lemma}

\newtheorem{proposition}[theorem]{Proposition}

\theoremstyle{definition}

\newtheorem{example}[theorem]{Example}
\theoremstyle{remark}

\title{Propagation processes on (hyper)graphs:\\ where zero forcing and burning meet}

\author{Aida Abiad\thanks{Corresponding author,  \texttt{a.abiad.monge@tue.nl}, Department of Mathematics and Computer Science, Eindhoven University of Technology, Eindhoven, The Netherlands\\Department of Mathematics and Data Science of Vrije Universiteit Brussel, Brussels, Belgium}\qquad 
\qquad Pax Mallee\thanks{\texttt{p.mallee@student.tue.nl},  Department of Mathematics and Computer Science, Eindhoven University of Technology, Eindhoven, The Netherlands}
}

\date{}

\begin{document}

\maketitle

\begin{abstract}
The burning and forcing processes are both instances of propagation processes on graphs that are commonly used to model real-world spreading phenomena. The contribution of this paper is two-fold. We first establish a connection between these two propagation processes via hypergraphs. We do so by showing a sharp upper bound on the zero forcing number of the incidence graph of a hypergraph in terms of the lazy burning number of the hypergraph, which builds up on and improves a result by Bonato, Jones, Marbach, Mishura and Zhang (\emph{Theor. Comput. Sci.}, 2025). Secondly, we deepen the understanding of the role of the burning process in the context of graph spectral characterizations, whose goal is to understand which graph properties are encoded in the spectrum. While for several graph properties, including the zero forcing number, it is known that the spectrum does not encode them, this question remained open for the burning number. We solve this problem by constructing infinitely many pairs of cospectral graphs which have a different burning number.
\end{abstract}

\section{Introduction}

The burning and forcing processes often appear across mathematics and computer science, but also in other fields such as statistical mechanics \cite{CLR1979}, physics \cite{ARENAS200893} or social network analysis \cite{G1978}), among others. In such fields, burning and zero forcing are used to model technical or societal processes. For an overview of the different models and
applications, refer to the survey \cite{survey-burning-number} and the book \cite{hogben2022inverse}.

The concept of graph burning, which was introduced in \cite{originburning}, is a deterministic, discrete-time model for the spread of social contagion on a graph. A social network can be modeled by a graph in which the vertices represent people and the edges represent relationships. For example, on a graph whose vertices correspond to Facebook users, edges may represent users who are Facebook friends. Information, such as gossip, rumors, or memes, can spread from vertex to vertex over time along edges of the graph and we model the process using discrete time-steps called rounds. Such information may not stem from a single source vertex; there may be a number of sources that appear over time in the graph. Authors often refer to vertices as unburned (unaware of the information) and burned (aware of the information) since rumors can appear to spread swiftly like fire. We will instead use colors to depict these states: uncolored for unburned and red for burned. Although computing the burning number is NP-complete (it is so even when restricted
to trees of maximum degree three; see \cite[Theorem 6]{bessy2017}), several authors have established the burning number for several graph classes, see e.g. \cite{A1992,TN2025}. Hypergraph burning was introduced in \cite{Burgess2024ExtendingGB} as a natural extension of graph burning. The rules for hypergraph burning are identical to those for burning graphs, except for how burning propagates within a hyperedge. Recall that in a hypergraph, a singleton edge contains exactly one vertex. In hypergraphs, the
burning spreads to a vertex $v$ in round $r$ if and only if there is a non-singleton
hyperedge $\{v, u_1,\dots, u_k\}$ such that $v$ was not burned and each of $u_1, u_2,\dots, u_k$ was burned at the end of round $r-1$. In particular, a vertex becomes burned if it is the only unburned vertex in a hyperedge. A natural variation of burning that is our principal focus here is \emph{lazy hypergraph burning}, where a set of vertices is chosen to burn in the first round only; no other vertices are chosen to burn in later rounds. The \emph{lazy burning number} of $H$, denoted $b_L(H)$ and introduced in \cite{hypergraph}, is the minimum cardinality of a set of vertices burned in the first round that eventually burns all vertices.

The lazy burning was introduced in order to establish a link with another well-known graph process: zero forcing. 
Zero forcing is a graph coloring process used to model spreading phenomena in real-world scenarios. It can also be viewed as a single-player combinatorial game on a graph, where the player's goal is to select a subset of vertices of minimum cardinality that eventually leads to all vertices of the graph being colored. In lazy burning, only one set of vertices is chosen to burn in the first round. In hypergraphs, burned vertices spread when all but one vertex in a hyperedge is burned. The lazy burning number is the minimum number of initially burned vertices that eventually burn all vertices. Recently, the authors of \cite{hypergraph} showed a characterization of lazy burning sets in hypergraphs via zero forcing sets in their incidence graphs. This was then used to derive a lower bound on the lazy burning of a hypergraph in terms of the zero forcing number of its incidence graph, see \cite[Theorem 25]{hypergraph} and the text after. However, the tightness of this bound was not discussed. Motivated by understanding the tightness of the bound derived from \cite[Theorem 25]{hypergraph}, and with the aim of further understanding the connection between the burning process and the zero forcing process, in this paper we derive an improved tight upper bound on the zero forcing number of the incidence graph of a hypergraph in terms of the lazy burning number of the hypergraph (see Theorem \ref{thm:improved bound}). We also provide some hypergraph classes that meet the new bound with equality (see Proposition \ref{propo:tightnessnewboundburningzeroforcingprocesses}).

In the second part of this paper we focus on the problem of graph spectral characterization in connection with the burning number. Spectral graph characterizations seek to study relations between the spectrum of a graph and its combinatorial properties by investigating the spectrum of the associated matrix. Some properties of a graph are known to be characterized by the spectrum of the associated matrix \cite{adjacnecy-matrix-strong-product}. However, not all graph properties can be characterized by the graph spectrum. If that is not the case, then there exists a pair of non-isomorphic graphs with the same spectrum (\emph{cospectral graphs}) that do not share that same property, instances of such graph properties were investigated, among others, in \cite{cospectral-regularity,cospectral-hamiltonian,cospectral-matchin}. Recently, also the zero forcing number has been shown not to be characterized by the spectrum \cite{aida-zero-forcing}. In the second part of this paper we continue this line of research by showing that the burning number of a graph does not follow from the spectrum of the graph adjacency matrix (see Corollary \ref{coro:burningnumbernotDS}), showing yet another analogy between both graph processes. We do so by providing a construction of two nonsiomorphic graph classes that are cospectral but have different burning numbers.

\section{Preliminaries}\label{sec:preliminaries}

Throughout this paper, we consider $G=(V(G),E(G))$ to be an undirected simple graph of order $|V|=n$. The \textit{adjacency matrix} of a graph $G=(V(G),E(G))$ is the $n\times n$-matrix $A$ where the $i,j$ entry $A_{ij}$ is $1$ if $\{v_i,v_j\}\in E(G)$ and $0$ otherwise. The \textit{spectrum} of a graph is the set of its adjacency eigenvalues.

A \textit{hypergraph} $H$ consists of a non-empty vertex set $V(H)$ vertices and a collection $E(H)$ of subsets of $V(H)$ whose elements are called \textit{hyperedges}.
An edge that contains exactly one vertex is called a \textit{singleton} hyperedge.

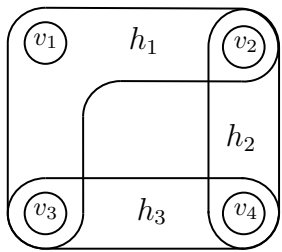
\begin{figure}[htp!]
\centering
\tikzset{every picture/.style={line width=0.75pt}} 

\begin{tikzpicture}[x=0.75pt,y=0.75pt,yscale=-1,xscale=1]

\draw   (125,123.5) .. controls (125,117.7) and (129.7,113) .. (135.5,113) .. controls (141.3,113) and (146,117.7) .. (146,123.5) .. controls (146,129.3) and (141.3,134) .. (135.5,134) .. controls (129.7,134) and (125,129.3) .. (125,123.5) -- cycle ;
\draw   (125,209.5) .. controls (125,203.7) and (129.7,199) .. (135.5,199) .. controls (141.3,199) and (146,203.7) .. (146,209.5) .. controls (146,215.3) and (141.3,220) .. (135.5,220) .. controls (129.7,220) and (125,215.3) .. (125,209.5) -- cycle ;
\draw   (225,209.5) .. controls (225,203.7) and (229.7,199) .. (235.5,199) .. controls (241.3,199) and (246,203.7) .. (246,209.5) .. controls (246,215.3) and (241.3,220) .. (235.5,220) .. controls (229.7,220) and (225,215.3) .. (225,209.5) -- cycle ;
\draw   (225,125.5) .. controls (225,119.7) and (229.7,115) .. (235.5,115) .. controls (241.3,115) and (246,119.7) .. (246,125.5) .. controls (246,131.3) and (241.3,136) .. (235.5,136) .. controls (229.7,136) and (225,131.3) .. (225,125.5) -- cycle ;
\draw    (135.5,191.7) -- (235.5,191.7) ;
\draw  [draw opacity=0] (234.27,191.7) .. controls (234.27,191.7) and (234.27,191.7) .. (234.27,191.7) .. controls (244.78,191.7) and (253.3,199.67) .. (253.3,209.5) .. controls (253.3,219.33) and (244.78,227.3) .. (234.27,227.3) -- (234.27,209.5) -- cycle ; \draw   (234.27,191.7) .. controls (234.27,191.7) and (234.27,191.7) .. (234.27,191.7) .. controls (244.78,191.7) and (253.3,199.67) .. (253.3,209.5) .. controls (253.3,219.33) and (244.78,227.3) .. (234.27,227.3) ;  
\draw    (135.5,227.3) -- (235.5,227.3) ;
\draw  [draw opacity=0] (253.3,208.27) .. controls (253.3,208.27) and (253.3,208.27) .. (253.3,208.27) .. controls (253.3,218.78) and (245.33,227.3) .. (235.5,227.3) .. controls (225.67,227.3) and (217.7,218.78) .. (217.7,208.27) -- (235.5,208.27) -- cycle ; \draw   (253.3,208.27) .. controls (253.3,208.27) and (253.3,208.27) .. (253.3,208.27) .. controls (253.3,218.78) and (245.33,227.3) .. (235.5,227.3) .. controls (225.67,227.3) and (217.7,218.78) .. (217.7,208.27) ;  
\draw    (217.7,125.5) -- (217.7,209.5) ;
\draw    (253.3,125.5) -- (253.3,209.5) ;
\draw  [draw opacity=0] (135.5,227.3) .. controls (135.5,227.3) and (135.5,227.3) .. (135.5,227.3) .. controls (124.99,227.3) and (116.48,219.33) .. (116.48,209.5) .. controls (116.48,199.67) and (124.99,191.7) .. (135.5,191.7) -- (135.5,209.5) -- cycle ; \draw   (135.5,227.3) .. controls (135.5,227.3) and (135.5,227.3) .. (135.5,227.3) .. controls (124.99,227.3) and (116.48,219.33) .. (116.48,209.5) .. controls (116.48,199.67) and (124.99,191.7) .. (135.5,191.7) ;  
\draw  [draw opacity=0] (217.7,125.5) .. controls (217.7,125.5) and (217.7,125.5) .. (217.7,125.5) .. controls (217.7,114.99) and (225.67,106.48) .. (235.5,106.48) .. controls (245.33,106.48) and (253.3,114.99) .. (253.3,125.5) -- (235.5,125.5) -- cycle ; \draw   (217.7,125.5) .. controls (217.7,125.5) and (217.7,125.5) .. (217.7,125.5) .. controls (217.7,114.99) and (225.67,106.48) .. (235.5,106.48) .. controls (245.33,106.48) and (253.3,114.99) .. (253.3,125.5) ;  
\draw  [draw opacity=0] (116.48,123.5) .. controls (116.48,113.67) and (124.99,105.7) .. (135.5,105.7) -- (135.5,123.5) -- cycle ; \draw   (116.48,123.5) .. controls (116.48,113.67) and (124.99,105.7) .. (135.5,105.7) ;  
\draw  [draw opacity=0] (154.52,209.5) .. controls (154.52,219.33) and (146.01,227.3) .. (135.5,227.3) .. controls (124.99,227.3) and (116.48,219.33) .. (116.48,209.5) -- (135.5,209.5) -- cycle ; \draw   (154.52,209.5) .. controls (154.52,219.33) and (146.01,227.3) .. (135.5,227.3) .. controls (124.99,227.3) and (116.48,219.33) .. (116.48,209.5) ;  
\draw    (116.48,123.5) -- (116.48,209.5) ;
\draw  [draw opacity=0] (234.5,106.12) .. controls (239.01,106.42) and (243.53,108.36) .. (247.09,111.91) .. controls (254.52,119.34) and (254.9,131) .. (247.95,137.95) .. controls (244.33,141.58) and (239.42,143.21) .. (234.5,142.88) -- (234.5,124.5) -- cycle ; \draw   (234.5,106.12) .. controls (239.01,106.42) and (243.53,108.36) .. (247.09,111.91) .. controls (254.52,119.34) and (254.9,131) .. (247.95,137.95) .. controls (244.33,141.58) and (239.42,143.21) .. (234.5,142.88) ;  
\draw    (135.5,105.7) -- (234.5,106.12) ;
\draw  [draw opacity=0] (154.48,160.5) .. controls (154.48,150.67) and (162.99,142.7) .. (173.5,142.7) -- (173.5,160.5) -- cycle ; \draw   (154.48,160.5) .. controls (154.48,150.67) and (162.99,142.7) .. (173.5,142.7) ;  
\draw    (154.48,160.5) -- (154.52,209.5) ;
\draw    (173.5,142.7) -- (234.5,142.88) ;

\draw (128,203) node [anchor=north west][inner sep=0.75pt]  [font=\footnotesize] [align=left] {$\displaystyle v_{3}$};
\draw (229,203) node [anchor=north west][inner sep=0.75pt]  [font=\footnotesize] [align=left] {$\displaystyle v_{4}$};
\draw (128,117) node [anchor=north west][inner sep=0.75pt]  [font=\footnotesize] [align=left] {$\displaystyle v_{1}$};
\draw (229,119) node [anchor=north west][inner sep=0.75pt]  [font=\footnotesize] [align=left] {$\displaystyle v_{2}$};
\draw (176,114) node [anchor=north west][inner sep=0.75pt]   [align=left] {$\displaystyle h_{1}$};
\draw (225,163) node [anchor=north west][inner sep=0.75pt]   [align=left] {$\displaystyle h_{2}$};
\draw (180,200) node [anchor=north west][inner sep=0.75pt]   [align=left] {$\displaystyle h_{3}$};

\end{tikzpicture}
\caption{A hypergraph with 4 vertices and 3 hyperedges.}
\label{fig:hypergraph-ex}
\end{figure}

For a hypergraph, one can define a graph that describes the incidence relationship between the vertices of the hypergraph and its hyperedges.

The \textit{incidence graph} of a hypergraph $H$, denoted $IG(H)$, is a bipartite graph with vertex set $V(H)\cup E(H)$ and edge set $\{vh\mid v\in V(H),h\in E(H) \text{ such that } v\in h\}$. 

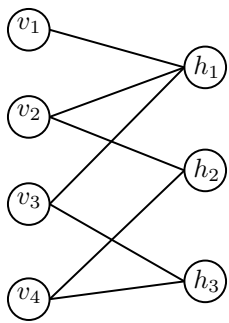
\begin{figure}[htp!]
    \centering
\tikzset{every picture/.style={line width=0.75pt}} 

\begin{tikzpicture}[x=0.75pt,y=0.75pt,yscale=-1,xscale=1]

\draw   (262,74.5) .. controls (262,68.7) and (266.7,64) .. (272.5,64) .. controls (278.3,64) and (283,68.7) .. (283,74.5) .. controls (283,80.3) and (278.3,85) .. (272.5,85) .. controls (266.7,85) and (262,80.3) .. (262,74.5) -- cycle ;
\draw   (262,118.5) .. controls (262,112.7) and (266.7,108) .. (272.5,108) .. controls (278.3,108) and (283,112.7) .. (283,118.5) .. controls (283,124.3) and (278.3,129) .. (272.5,129) .. controls (266.7,129) and (262,124.3) .. (262,118.5) -- cycle ;
\draw   (262,162.5) .. controls (262,156.7) and (266.7,152) .. (272.5,152) .. controls (278.3,152) and (283,156.7) .. (283,162.5) .. controls (283,168.3) and (278.3,173) .. (272.5,173) .. controls (266.7,173) and (262,168.3) .. (262,162.5) -- cycle ;
\draw   (262,210.5) .. controls (262,204.7) and (266.7,200) .. (272.5,200) .. controls (278.3,200) and (283,204.7) .. (283,210.5) .. controls (283,216.3) and (278.3,221) .. (272.5,221) .. controls (266.7,221) and (262,216.3) .. (262,210.5) -- cycle ;
\draw   (351,93.5) .. controls (351,87.7) and (355.7,83) .. (361.5,83) .. controls (367.3,83) and (372,87.7) .. (372,93.5) .. controls (372,99.3) and (367.3,104) .. (361.5,104) .. controls (355.7,104) and (351,99.3) .. (351,93.5) -- cycle ;
\draw   (351,145.5) .. controls (351,139.7) and (355.7,135) .. (361.5,135) .. controls (367.3,135) and (372,139.7) .. (372,145.5) .. controls (372,151.3) and (367.3,156) .. (361.5,156) .. controls (355.7,156) and (351,151.3) .. (351,145.5) -- cycle ;
\draw   (351,201.5) .. controls (351,195.7) and (355.7,191) .. (361.5,191) .. controls (367.3,191) and (372,195.7) .. (372,201.5) .. controls (372,207.3) and (367.3,212) .. (361.5,212) .. controls (355.7,212) and (351,207.3) .. (351,201.5) -- cycle ;
\draw    (283,74.5) -- (351,93.5) ;
\draw    (283,118.5) -- (351,93.5) ;
\draw    (283,162.5) -- (351,93.5) ;
\draw    (283,118.5) -- (351,145.5) ;
\draw    (283,162.5) -- (351,201.5) ;
\draw    (283,210.5) -- (351,201.5) ;
\draw    (283,210.5) -- (351,145.5) ;

\draw (265,67) node [anchor=north west][inner sep=0.75pt]  [font=\footnotesize] [align=left] {$\displaystyle v_{1}$};
\draw (265,111) node [anchor=north west][inner sep=0.75pt]  [font=\footnotesize] [align=left] {$\displaystyle v_{2}$};
\draw (265,155) node [anchor=north west][inner sep=0.75pt]  [font=\footnotesize] [align=left] {$\displaystyle v_{3}$};
\draw (265,203) node [anchor=north west][inner sep=0.75pt]  [font=\footnotesize] [align=left] {$\displaystyle v_{4}$};
\draw (354,86) node [anchor=north west][inner sep=0.75pt]  [font=\footnotesize] [align=left] {$\displaystyle h_{1}$};
\draw (354,138) node [anchor=north west][inner sep=0.75pt]  [font=\footnotesize] [align=left] {$\displaystyle h_{2}$};
\draw (354,194) node [anchor=north west][inner sep=0.75pt]  [font=\footnotesize] [align=left] {$\displaystyle h_{3}$};
\end{tikzpicture}
    \caption{The incidence graph of the hypergraph in Figure~\ref{fig:hypergraph-ex}.}
    \label{fig:incidence-graph-ex}
\end{figure}

\subsection{The zero forcing process on graphs}

The {zero forcing process} on a graph $G$ is defined as follows. Initially, there is a subset $B$ of blue vertices, while all other vertices are white. 
The \emph{color change rule} dictates that at each step, a blue vertex
with exactly one white neighbor will force its white neighbor to become blue. 
Each vertex will remain the same color from the previous step unless affected by the color change rule.
The set $B$ is said to be a zero forcing set if, by iteratively applying the color change rule, all of $V$ becomes blue. The \emph{zero forcing number} of $G$ is the minimum cardinality of a zero
forcing set in $G$, denoted by $Z(G)$.

The following result will be used in Section \ref{sec:burning}.

\begin{theorem}(cf. \cite{hogben2022inverse})
If $G$ is a disconnected graph with connected components $\{G_i\}_{1\leq i\leq k}$, then
    \[ Z(G)=\sum_{i=1}^{k}Z(G_i).
    \]
    \label{thm:connected components zero forcing number}
\end{theorem}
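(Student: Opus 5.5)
The plan is to prove the two inequalities $Z(G)\le\sum_i Z(G_i)$ and $Z(G)\ge\sum_i Z(G_i)$ separately. The single observation behind both is that the color change rule is \emph{local}. Whether a blue vertex $u$ may force depends only on the colors of its neighbors, and all of these lie in the component containing $u$. So a force in $G$ at a vertex of $G_i$ is exactly a force in $G_i$, computed from the restriction of the current coloring to $V(G_i)$. Forces in different components therefore do not interact.

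For the upper bound, I will choose for each $i$ a minimum zero forcing set $B_i$ of $G_i$, so $|B_i|=Z(G_i)$, and set $B=\bigcup_i B_i$. This union is disjoint, so $|B|=\sum_i Z(G_i)$. Next I fix, for each $i$, a chronological list of forces in $G_i$ that turns all of $V(G_i)$ blue starting from $B_i$. Concatenating these lists gives a sequence of forces in $G$. By locality, each force in the concatenation is still valid in $G$ at the moment it is applied. The reason is that the colors inside $V(G_i)$ when the force is performed are exactly those in the original run on $G_i$: forces carried out in other components never change vertices of $G_i$. Hence $B$ is a zero forcing set of $G$, and $Z(G)\le\sum_i Z(G_i)$.

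For the lower bound, I will take a minimum zero forcing set $B$ of $G$ and a chronological list of forces that colors all of $V(G)$. For each $i$ put $B_i=B\cap V(G_i)$, and keep from the list only the forces whose forcing vertex lies in $G_i$; their forced vertex then also lies in $G_i$. I will argue by induction along the list that every retained force is valid in $G_i$ starting from $B_i$. Again the reason is locality: the set of blue vertices of $G_i$ just before such a force is exactly $B_i$ together with the vertices forced earlier by retained forces of $G_i$. Every vertex of $G_i$ ends up blue in $G$, so each vertex of $G_i$ is either in $B_i$ or forced by a retained force. Thus $B_i$ is a zero forcing set of $G_i$, giving $Z(G_i)\le |B_i|$. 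Summing over $i$ yields $\sum_i Z(G_i)\le\sum_i|B_i|=|B|=Z(G)$.

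The only step needing care is stating locality precisely, since it is what makes forces in different components independent. The key facts are that $N_G(u)=N_{G_i}(u)$ for every $u\in V(G_i)$, and that a force changes the color of only one vertex. Phrasing both directions through chronological lists of forces, rather than through simultaneous rounds, avoids any timing issues. Beyond this the argument is routine bookkeeping, so I do not expect a genuine obstacle.
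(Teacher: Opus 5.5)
Your proof is correct and complete, and it is essentially the standard argument. The paper gives no proof of its own; it cites \cite{hogben2022inverse}, where this is recorded as an observation resting on exactly the locality you isolate: $N_G(u)=N_{G_i}(u)$ for $u\in V(G_i)$, so forces in different components never interact. Your argument also tacitly uses the standard fact that the final blue set does not depend on the order in which valid forces are performed. That fact is what justifies replacing the paper's round-based description of the process by chronological lists of forces.
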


An interesting observation can be made on the vertices in a minimal zero forcing set. At least one neighbor of every vertex in the zero forcing set must initially be uncolored.
This observation was generalized for the $k$-forcing process in \cite{k-forcing-upper}. However, in this paper only the zero forcing process for $k=1$ will be used.

\begin{lemma}{\cite{k-forcing-upper}}
    Let $G$ be a graph of order $n\geq 2$ with a minimum degree $\delta\geq 1$. Let $B$ be a minimum zero forcing set of $G$. Then for each $v\in V(G)$, there is at least one vertex $w\in V(G)\backslash B$ that is adjacent to $v$.
    \label{lemma:zfp-neighbor-uncolored}
\end{lemma}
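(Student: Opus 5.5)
The plan is a proof by contradiction that exploits the minimality of $B$. Fix a minimum zero forcing set $B$ of $G$ and suppose some vertex $v$ has all of its neighbours in $B$, that is, $N(v)\subseteq B$. Since $\delta\geq 1$, the set $N(v)$ is nonempty. The goal is to build a zero forcing set of size $|B|-1$, which contradicts $|B|=Z(G)$.

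\emph{Case 1: $v\notin B$.} Choose any $u\in N(v)$; then $u\in B$. We use the set $B'=(B\setminus\{u\})\cup\{v\}$. It has the same size as $B$, so it does not yet give a contradiction, and this case needs a different idea. Instead, we look at the forcing chain. In the process starting from $B$, $v$ is eventually forced by some neighbour $w\in B$, or by a neighbour that became blue later; but every neighbour of $v$ is already blue at the start. Now consider the set $B\setminus\{w\}$ for a suitable neighbour $w$, and try to recover $w$. This does not work directly either. The cleaner route is to reorder the process: at the moment $v$ is forced by $w$, the vertex $v$ is the unique white neighbour of $w$. Replace $B$ by $B''=(B\setminus\{w\})\cup\{v\}$. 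From $B''$, the vertex $v$ is blue, and all of $v$'s neighbours other than $w$ are blue. So $v$ has exactly one white neighbour, namely $w$, and $v$ forces $w$ immediately. After that, the colouring contains $B\cup\{v\}$, which is everything that $B$ had reached by the time it forced $v$; this uses the monotonicity of zero forcing: a superset of blue vertices can still perform every later force. So $B''$ is also a minimum zero forcing set, but now $v\in B''$ with $N(v)\subseteq B''\setminus\{w\}\cup\{w\}$ after one step. This reduces the problem to Case 2.

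\emph{Case 2: $v\in B$.} Here $v$ and all of its neighbours are blue initially, so $v$ never performs a force: it never has a white neighbour. Pick a neighbour $u\in N(v)\subseteq B$, and consider $B\setminus\{u\}$. Initially $v$ is blue and its only white neighbour is $u$, so $v$ forces $u$ in the first step. After that, all of $B$ is blue, and the original forcing process proceeds. Hence $B\setminus\{u\}$ is a zero forcing set of size $|B|-1$, which is a contradiction.

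The main obstacle is Case 1, the reduction when $v\notin B$. The key points are to pick the vertex $w$ that actually forces $v$ in a fixed chronological list of forces, and to check carefully that $B''$ stays a minimum zero forcing set in which $v$ now has all of its neighbours blue except the one it forces first. Done more simply: from $B''$, the vertex $v$ has $N(v)\setminus\{w\}$ blue. Apply Case 2's trick directly by removing a neighbour $u\neq w$ if one exists, which leaves two white neighbours, so this fails. Therefore, we instead argue with $B''$ itself: $N(v)\setminus\{w\}\subseteq B''$ and $v\in B''$. If $\deg v=1$, note that $w$'s removal is already compensated by $v$ and nothing is gained. In that case, handle it by observing that $w$ itself is blue in $B$ with $N(w)$ eventually forcing, and recursing along the forcing chain. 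I expect this degree-one subcase to need the most care.
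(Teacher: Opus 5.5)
Your Case 2 is correct, and it is the argument the paper relies on: it is what is used in Section~\ref{sec:burning} to delete $h_2$ from $\{v_2\}\cup E(H)$. If $v\in B$ and $N(v)\subseteq B$, pick any $u\in N(v)$; this is possible since $\delta\ge 1$. Then $B\setminus\{u\}$ is zero forcing, because $v$ forces $u$ in the first step and the blue set becomes exactly $B$. This contradicts minimality. It proves the lemma in the form stated in the sentence just before it and used later: every vertex \emph{of $B$} has a neighbour outside $B$.

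Case 1 is a genuine gap, and it cannot be closed, because for $v\notin B$ the assertion is false. Take $G=K_n$ with $n\ge 2$ and $B=V(G)\setminus\{x\}$. Every vertex of $B$ has $x$ as its unique white neighbour, so $B$ is zero forcing, and it is minimum since $Z(K_n)=n-1$. Yet $V(G)\setminus B=\{x\}$ contains no neighbour of $x$. For $n=2$ this is exactly your degree-one subcase, and for $n\ge 3$ it shows the failure is not confined to $\deg v=1$.

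Your swap $B''=(B\setminus\{w\})\cup\{v\}$ is a valid observation: it is again a minimum zero forcing set. But in $B''$ the vertex $v$ has the white neighbour $w$, so Case 2's hypothesis no longer holds and no contradiction arises. The proposed ``recursion along the forcing chain'' therefore has nothing to reduce to.

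The fix is to read the quantifier as $v\in B$. The printed ``for each $v\in V(G)$'' overstates the result; the preceding sentence in the paper and its later use concern only vertices of the zero forcing set. With that reading, discard Case 1 and present Case 2 alone as the complete proof.
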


\subsection{The burning process on (hyper)graphs}

   Consider a graph $G=(V(G),E(G))$. The vertices of the graph $G$ can be either \textit{burned} or \textit{unburned} throughout the process. Initially, in the first round $t=0$, all vertices are unburned. In each round $t\geq 1$, an unburned vertex is chosen to become burned, if there is one, and is called a \textit{source}. An unburned vertex remains unburned unless it becomes burned by the burning process, and once a vertex becomes burned, it remains burned for the rest of the process. If a vertex is burned in round $t$, then all its neighbors become burned in round $t+1$. 
    The process ends when all vertices are burned. 
    The sources that are chosen over time form a \textit{burning sequence}, and the shortest of all possible burning sequences is called \textit{optimal}.
    Note that a new source is chosen first in each round and only later in the same round do the neighbors of the previously burned vertices become burned.
    A source in round $t$ is called \textit{redundant} if one of its neighbors is burned in round $t-1$.

In contrast to the firefighting problem, where the goal is to contain a fire, in the burning process, the aim is usually to spread an influence as quickly as possible over a network.
Therefore, a natural question to ask is which sources should be chosen and in which order they need to be chosen to form an optimal burning sequence so that the entire network has received the message in as few rounds as possible.



The \textit{burning number} $b(G)$ of a graph $G$ is the minimum number of rounds needed to burn the entire graph with the burning process.

\begin{example}
    Consider the graph $G=(V(G),E(G))$ in Figure~\ref{fig:graph burning}. The propagation of the burning process with burning sequence $S=(v_2,v_5,v_8)$ is shown. In the first round, only $v_2$ is burned, indicated with deep red. In the second round, first $v_5$ gets burned and then the neighbors of $v_2$ get burned, indicated with light red. In the third round, all the entire graphs is burned. In fact, the burning number of $G$ is $b(G)=3$ which means that $S$ is an optimal burning sequence. Note that $S$ is not a unique optimal burning sequence in this case as $S'=(v_6,v_1,v_5)$ is also a burning sequence of length 3.
    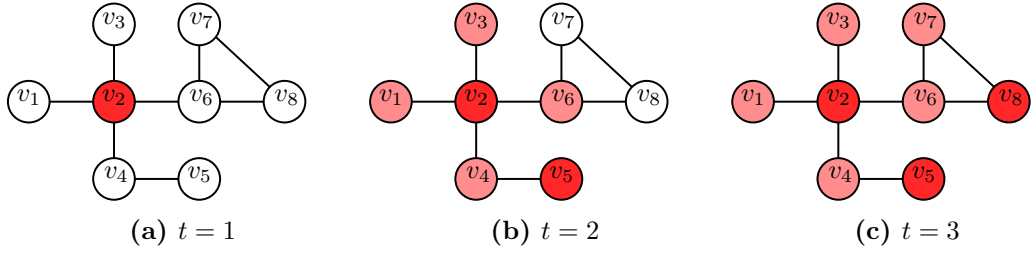
\begin{figure}[htp!]
        \centering
        \begin{subfigure}[t]{0.30\textwidth}

\tikzset{every picture/.style={line width=0.75pt}} 

\begin{tikzpicture}[x=0.75pt,y=0.75pt,yscale=-1,xscale=1]

\draw   (257,103.5) .. controls (257,97.7) and (261.7,93) .. (267.5,93) .. controls (273.3,93) and (278,97.7) .. (278,103.5) .. controls (278,109.3) and (273.3,114) .. (267.5,114) .. controls (261.7,114) and (257,109.3) .. (257,103.5) -- cycle ;
\draw  [fill={rgb, 255:red, 255; green, 0; blue, 0 }  ,fill opacity=0.85 ] (257,142.5) .. controls (257,136.7) and (261.7,132) .. (267.5,132) .. controls (273.3,132) and (278,136.7) .. (278,142.5) .. controls (278,148.3) and (273.3,153) .. (267.5,153) .. controls (261.7,153) and (257,148.3) .. (257,142.5) -- cycle ;
\draw   (257,181.5) .. controls (257,175.7) and (261.7,171) .. (267.5,171) .. controls (273.3,171) and (278,175.7) .. (278,181.5) .. controls (278,187.3) and (273.3,192) .. (267.5,192) .. controls (261.7,192) and (257,187.3) .. (257,181.5) -- cycle ;
\draw   (300,142.5) .. controls (300,136.7) and (304.7,132) .. (310.5,132) .. controls (316.3,132) and (321,136.7) .. (321,142.5) .. controls (321,148.3) and (316.3,153) .. (310.5,153) .. controls (304.7,153) and (300,148.3) .. (300,142.5) -- cycle ;
\draw   (214,142.5) .. controls (214,136.7) and (218.7,132) .. (224.5,132) .. controls (230.3,132) and (235,136.7) .. (235,142.5) .. controls (235,148.3) and (230.3,153) .. (224.5,153) .. controls (218.7,153) and (214,148.3) .. (214,142.5) -- cycle ;
\draw   (300,181.5) .. controls (300,175.7) and (304.7,171) .. (310.5,171) .. controls (316.3,171) and (321,175.7) .. (321,181.5) .. controls (321,187.3) and (316.3,192) .. (310.5,192) .. controls (304.7,192) and (300,187.3) .. (300,181.5) -- cycle ;
\draw   (343,142.5) .. controls (343,136.7) and (347.7,132) .. (353.5,132) .. controls (359.3,132) and (364,136.7) .. (364,142.5) .. controls (364,148.3) and (359.3,153) .. (353.5,153) .. controls (347.7,153) and (343,148.3) .. (343,142.5) -- cycle ;
\draw   (300,103.5) .. controls (300,97.7) and (304.7,93) .. (310.5,93) .. controls (316.3,93) and (321,97.7) .. (321,103.5) .. controls (321,109.3) and (316.3,114) .. (310.5,114) .. controls (304.7,114) and (300,109.3) .. (300,103.5) -- cycle ;
\draw    (267.5,153) -- (267.5,171) ;
\draw    (300,181.5) -- (278,181.5) ;
\draw [fill={rgb, 255:red, 255; green, 121; blue, 121 }  ,fill opacity=0.85 ]   (278,142.5) -- (300,142.5) ;
\draw [fill={rgb, 255:red, 255; green, 121; blue, 121 }  ,fill opacity=0.85 ]   (310.5,114) -- (310.5,132) ;
\draw    (321,142.5) -- (343,142.5) ;
\draw    (319,110) -- (346,135) ;
\draw    (267.5,114) -- (267.5,132) ;
\draw    (235,142.5) -- (257,142.5) ;

\draw (217,135) node [anchor=north west][inner sep=0.75pt]  [font=\footnotesize] [align=left] {$\displaystyle v_{1}$};
\draw (303,174) node [anchor=north west][inner sep=0.75pt]  [font=\footnotesize] [align=left] {$\displaystyle v_{5}$};
\draw (261,174) node [anchor=north west][inner sep=0.75pt]  [font=\footnotesize] [align=left] {$\displaystyle v_{4}$};
\draw (260,135) node [anchor=north west][inner sep=0.75pt]  [font=\footnotesize] [align=left] {$\displaystyle v_{2}$};
\draw (304,135) node [anchor=north west][inner sep=0.75pt]  [font=\footnotesize] [align=left] {$\displaystyle v_{6}$};
\draw (260,96) node [anchor=north west][inner sep=0.75pt]  [font=\footnotesize] [align=left] {$\displaystyle v_{3}$};
\draw (347,135) node [anchor=north west][inner sep=0.75pt]  [font=\footnotesize] [align=left] {$\displaystyle v_{8}$};
\draw (304,96) node [anchor=north west][inner sep=0.75pt]  [font=\footnotesize] [align=left] {$\displaystyle v_{7}$};

\end{tikzpicture}

            \caption{$t=1$}
        \end{subfigure}
        \begin{subfigure}[t]{0.30\textwidth}

\tikzset{every picture/.style={line width=0.75pt}} 

\begin{tikzpicture}[x=0.75pt,y=0.75pt,yscale=-1,xscale=1]

\draw  [fill={rgb, 255:red, 255; green, 121; blue, 121 }  ,fill opacity=0.85 ] (257,103.5) .. controls (257,97.7) and (261.7,93) .. (267.5,93) .. controls (273.3,93) and (278,97.7) .. (278,103.5) .. controls (278,109.3) and (273.3,114) .. (267.5,114) .. controls (261.7,114) and (257,109.3) .. (257,103.5) -- cycle ;
\draw  [fill={rgb, 255:red, 255; green, 0; blue, 0 }  ,fill opacity=0.85 ] (257,142.5) .. controls (257,136.7) and (261.7,132) .. (267.5,132) .. controls (273.3,132) and (278,136.7) .. (278,142.5) .. controls (278,148.3) and (273.3,153) .. (267.5,153) .. controls (261.7,153) and (257,148.3) .. (257,142.5) -- cycle ;
\draw  [fill={rgb, 255:red, 255; green, 121; blue, 121 }  ,fill opacity=0.85 ] (257,181.5) .. controls (257,175.7) and (261.7,171) .. (267.5,171) .. controls (273.3,171) and (278,175.7) .. (278,181.5) .. controls (278,187.3) and (273.3,192) .. (267.5,192) .. controls (261.7,192) and (257,187.3) .. (257,181.5) -- cycle ;
\draw  [fill={rgb, 255:red, 255; green, 121; blue, 121 }  ,fill opacity=0.85 ] (300,142.5) .. controls (300,136.7) and (304.7,132) .. (310.5,132) .. controls (316.3,132) and (321,136.7) .. (321,142.5) .. controls (321,148.3) and (316.3,153) .. (310.5,153) .. controls (304.7,153) and (300,148.3) .. (300,142.5) -- cycle ;
\draw  [fill={rgb, 255:red, 255; green, 121; blue, 121 }  ,fill opacity=0.85 ] (214,142.5) .. controls (214,136.7) and (218.7,132) .. (224.5,132) .. controls (230.3,132) and (235,136.7) .. (235,142.5) .. controls (235,148.3) and (230.3,153) .. (224.5,153) .. controls (218.7,153) and (214,148.3) .. (214,142.5) -- cycle ;
\draw  [fill={rgb, 255:red, 255; green, 0; blue, 0 }  ,fill opacity=0.85 ] (300,181.5) .. controls (300,175.7) and (304.7,171) .. (310.5,171) .. controls (316.3,171) and (321,175.7) .. (321,181.5) .. controls (321,187.3) and (316.3,192) .. (310.5,192) .. controls (304.7,192) and (300,187.3) .. (300,181.5) -- cycle ;
\draw   (343,142.5) .. controls (343,136.7) and (347.7,132) .. (353.5,132) .. controls (359.3,132) and (364,136.7) .. (364,142.5) .. controls (364,148.3) and (359.3,153) .. (353.5,153) .. controls (347.7,153) and (343,148.3) .. (343,142.5) -- cycle ;
\draw   (300,103.5) .. controls (300,97.7) and (304.7,93) .. (310.5,93) .. controls (316.3,93) and (321,97.7) .. (321,103.5) .. controls (321,109.3) and (316.3,114) .. (310.5,114) .. controls (304.7,114) and (300,109.3) .. (300,103.5) -- cycle ;
\draw [fill={rgb, 255:red, 255; green, 121; blue, 121 }  ,fill opacity=0.85 ]   (267.5,153) -- (267.5,171) ;
\draw [fill={rgb, 255:red, 255; green, 121; blue, 121 }  ,fill opacity=0.85 ]   (300,181.5) -- (278,181.5) ;
\draw [fill={rgb, 255:red, 255; green, 121; blue, 121 }  ,fill opacity=0.85 ]   (278,142.5) -- (300,142.5) ;
\draw [fill={rgb, 255:red, 255; green, 121; blue, 121 }  ,fill opacity=0.85 ]   (310.5,114) -- (310.5,132) ;
\draw    (321,142.5) -- (343,142.5) ;
\draw    (319,110) -- (346,135) ;
\draw [fill={rgb, 255:red, 255; green, 121; blue, 121 }  ,fill opacity=0.85 ]   (267.5,114) -- (267.5,132) ;
\draw [fill={rgb, 255:red, 255; green, 121; blue, 121 }  ,fill opacity=0.85 ]   (235,142.5) -- (257,142.5) ;

\draw (217,135) node [anchor=north west][inner sep=0.75pt]  [font=\footnotesize] [align=left] {$\displaystyle v_{1}$};
\draw (303,174) node [anchor=north west][inner sep=0.75pt]  [font=\footnotesize] [align=left] {$\displaystyle v_{5}$};
\draw (261,174) node [anchor=north west][inner sep=0.75pt]  [font=\footnotesize] [align=left] {$\displaystyle v_{4}$};
\draw (260,135) node [anchor=north west][inner sep=0.75pt]  [font=\footnotesize] [align=left] {$\displaystyle v_{2}$};
\draw (304,135) node [anchor=north west][inner sep=0.75pt]  [font=\footnotesize] [align=left] {$\displaystyle v_{6}$};
\draw (260,96) node [anchor=north west][inner sep=0.75pt]  [font=\footnotesize] [align=left] {$\displaystyle v_{3}$};
\draw (347,135) node [anchor=north west][inner sep=0.75pt]  [font=\footnotesize] [align=left] {$\displaystyle v_{8}$};
\draw (304,96) node [anchor=north west][inner sep=0.75pt]  [font=\footnotesize] [align=left] {$\displaystyle v_{7}$};

\end{tikzpicture}
            \caption{$t=2$}
        \end{subfigure}
        \begin{subfigure}[t]{0.30\textwidth}

\tikzset{every picture/.style={line width=0.75pt}} 

\begin{tikzpicture}[x=0.75pt,y=0.75pt,yscale=-1,xscale=1]

\draw  [fill={rgb, 255:red, 255; green, 121; blue, 121 }  ,fill opacity=0.85 ] (257,103.5) .. controls (257,97.7) and (261.7,93) .. (267.5,93) .. controls (273.3,93) and (278,97.7) .. (278,103.5) .. controls (278,109.3) and (273.3,114) .. (267.5,114) .. controls (261.7,114) and (257,109.3) .. (257,103.5) -- cycle ;
\draw  [fill={rgb, 255:red, 255; green, 0; blue, 0 }  ,fill opacity=0.85 ] (257,142.5) .. controls (257,136.7) and (261.7,132) .. (267.5,132) .. controls (273.3,132) and (278,136.7) .. (278,142.5) .. controls (278,148.3) and (273.3,153) .. (267.5,153) .. controls (261.7,153) and (257,148.3) .. (257,142.5) -- cycle ;
\draw  [fill={rgb, 255:red, 255; green, 121; blue, 121 }  ,fill opacity=0.85 ] (257,181.5) .. controls (257,175.7) and (261.7,171) .. (267.5,171) .. controls (273.3,171) and (278,175.7) .. (278,181.5) .. controls (278,187.3) and (273.3,192) .. (267.5,192) .. controls (261.7,192) and (257,187.3) .. (257,181.5) -- cycle ;
\draw  [fill={rgb, 255:red, 255; green, 121; blue, 121 }  ,fill opacity=0.85 ] (300,142.5) .. controls (300,136.7) and (304.7,132) .. (310.5,132) .. controls (316.3,132) and (321,136.7) .. (321,142.5) .. controls (321,148.3) and (316.3,153) .. (310.5,153) .. controls (304.7,153) and (300,148.3) .. (300,142.5) -- cycle ;
\draw  [fill={rgb, 255:red, 255; green, 121; blue, 121 }  ,fill opacity=0.85 ] (214,142.5) .. controls (214,136.7) and (218.7,132) .. (224.5,132) .. controls (230.3,132) and (235,136.7) .. (235,142.5) .. controls (235,148.3) and (230.3,153) .. (224.5,153) .. controls (218.7,153) and (214,148.3) .. (214,142.5) -- cycle ;
\draw  [fill={rgb, 255:red, 255; green, 0; blue, 0 }  ,fill opacity=0.85 ] (300,181.5) .. controls (300,175.7) and (304.7,171) .. (310.5,171) .. controls (316.3,171) and (321,175.7) .. (321,181.5) .. controls (321,187.3) and (316.3,192) .. (310.5,192) .. controls (304.7,192) and (300,187.3) .. (300,181.5) -- cycle ;
\draw  [fill={rgb, 255:red, 255; green, 0; blue, 0 }  ,fill opacity=0.85 ] (343,142.5) .. controls (343,136.7) and (347.7,132) .. (353.5,132) .. controls (359.3,132) and (364,136.7) .. (364,142.5) .. controls (364,148.3) and (359.3,153) .. (353.5,153) .. controls (347.7,153) and (343,148.3) .. (343,142.5) -- cycle ;
\draw  [fill={rgb, 255:red, 255; green, 121; blue, 121 }  ,fill opacity=0.85 ] (300,103.5) .. controls (300,97.7) and (304.7,93) .. (310.5,93) .. controls (316.3,93) and (321,97.7) .. (321,103.5) .. controls (321,109.3) and (316.3,114) .. (310.5,114) .. controls (304.7,114) and (300,109.3) .. (300,103.5) -- cycle ;
\draw [fill={rgb, 255:red, 255; green, 121; blue, 121 }  ,fill opacity=0.85 ]   (267.5,153) -- (267.5,171) ;
\draw [fill={rgb, 255:red, 255; green, 121; blue, 121 }  ,fill opacity=0.85 ]   (300,181.5) -- (278,181.5) ;
\draw [fill={rgb, 255:red, 255; green, 121; blue, 121 }  ,fill opacity=0.85 ]   (278,142.5) -- (300,142.5) ;
\draw [fill={rgb, 255:red, 255; green, 121; blue, 121 }  ,fill opacity=0.85 ]   (310.5,114) -- (310.5,132) ;
\draw [fill={rgb, 255:red, 255; green, 121; blue, 121 }  ,fill opacity=0.85 ]   (321,142.5) -- (343,142.5) ;
\draw [fill={rgb, 255:red, 255; green, 121; blue, 121 }  ,fill opacity=0.85 ]   (319,110) -- (346,135) ;
\draw [fill={rgb, 255:red, 255; green, 121; blue, 121 }  ,fill opacity=0.85 ]   (267.5,114) -- (267.5,132) ;
\draw [fill={rgb, 255:red, 255; green, 121; blue, 121 }  ,fill opacity=0.85 ]   (235,142.5) -- (257,142.5) ;

\draw (217,135) node [anchor=north west][inner sep=0.75pt]  [font=\footnotesize] [align=left] {$\displaystyle v_{1}$};
\draw (303,174) node [anchor=north west][inner sep=0.75pt]  [font=\footnotesize] [align=left] {$\displaystyle v_{5}$};
\draw (261,174) node [anchor=north west][inner sep=0.75pt]  [font=\footnotesize] [align=left] {$\displaystyle v_{4}$};
\draw (260,135) node [anchor=north west][inner sep=0.75pt]  [font=\footnotesize] [align=left] {$\displaystyle v_{2}$};
\draw (304,135) node [anchor=north west][inner sep=0.75pt]  [font=\footnotesize] [align=left] {$\displaystyle v_{6}$};
\draw (260,96) node [anchor=north west][inner sep=0.75pt]  [font=\footnotesize] [align=left] {$\displaystyle v_{3}$};
\draw (347,135) node [anchor=north west][inner sep=0.75pt]  [font=\footnotesize] [align=left] {$\displaystyle v_{8}$};
\draw (304,96) node [anchor=north west][inner sep=0.75pt]  [font=\footnotesize] [align=left] {$\displaystyle v_{7}$};

\end{tikzpicture}
            \caption{$t=3$}
        \end{subfigure}
        \caption{Graph burning process for the first three rounds with burned vertices marked in red and sources are highlighted in deep red.}
        \label{fig:graph burning}
    \end{figure}
\end{example}

Finding the burning number by checking every possible burning sequence could work for smaller graphs, but it becomes increasingly more difficult as the number of vertices grows. 
There exist other methods for finding the burning number of a graph that do not involve trying every possible burning sequence.
For that, other characterizations of the burning number are used.
In a graph $G$, observe that when a vertex $v_i$ is chosen as the source in round $i<b(G)$, in round $j>i$ all vertices in $N_{j-i}[v_i]$ are burned considering $b(G)\geq j$. 
Additionally, if a vertex is burned in round $i$, it cannot be chosen as the source in any round $i<j\leq b(G)$. This means that two sources $v_i$ and $v_j$ in a burning sequence must be at least a distance $|i-j|$ from each other.
These observations lead to another approach to find the burning number of a graph, which we will use in Section \ref{sec:cospectralburning}.

\begin{theorem}{(Alternative characterization of a burning sequence, \cite{survey-burning-number})}
    Let $G$ be a graph. A sequence $(v_1,v_2,\dots,v_k)$ of length $k$ in the graph $G$ is a burning sequence if and only if for each pair $i,j\in[k]$, it holds that $d(v_i,v_j)\geq |i-j|$ and
    $\cup_{l=1}^k N_{k-l}[v_l] = V(G)$.
    \label{thm:alternative burning}
\end{theorem}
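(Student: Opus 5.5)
The plan is to reduce both directions to one explicit description of the burned set after each round, and then read off the two conditions of Theorem~\ref{thm:alternative burning} directly.

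\textbf{Step 1 (burned set after round $t$).} Let $(v_1,\dots,v_k)$ be a sequence of vertices used as sources, $v_t$ in round $t$. Write $B_t$ for the set of vertices burned at the end of round $t$, with $B_0=\emptyset$. I would prove by induction on $t$ that
\[
B_t=\bigcup_{l=1}^{t} N_{t-l}[v_l].
\]
The base case $t=1$ gives $B_1=\{v_1\}=N_0[v_1]$. For the inductive step, note that round $t+1$ first adds the source $v_{t+1}$ and then adds every neighbour of a vertex of $B_t$. Hence $B_{t+1}=\{v_{t+1}\}\cup N[B_t]$. Combining this with $N[N_r[v]]=N_{r+1}[v]$ gives
\[
B_{t+1}=N_0[v_{t+1}]\cup\bigcup_{l\le t}N_{t+1-l}[v_l].
\]
A vertex that spreads in round $t+1$ is either in $B_t$ or is the new source. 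The new source's neighbours burn only in round $t+2$, which is consistent with the radius $0$ for $v_{t+1}$.

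\textbf{Step 2 (legality of each source).} The sequence is a legal burning sequence exactly when, for each $j$, the vertex $v_j$ is unburned when it is chosen, that is, $v_j\notin B_{j-1}$. By Step 1 this says $v_j\notin N_{j-1-l}[v_l]$ for every $l<j$. Equivalently, $d(v_l,v_j)>j-1-l$, which means $d(v_l,v_j)\ge j-l$ for all $l<j$. By the symmetry of the distance, this is the condition $d(v_i,v_j)\ge |i-j|$ for all $i,j\in[k]$; the case $i=j$ holds trivially. In particular the sources are pairwise distinct.

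\textbf{Step 3 (completion).} The process has burned every vertex by round $k$ exactly when $B_k=V(G)$. By Step 1 this is $\bigcup_{l=1}^k N_{k-l}[v_l]=V(G)$. Putting Steps 2 and 3 together gives both implications at once.

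\textbf{Main obstacle.} The main care point is the convention in Step 1 about the order of events within a round: the source is chosen before spreading, and a freshly chosen source does not spread in the same round. Getting this wrong shifts all radii by one. A secondary point is the rule that a source is chosen only ``if there is one'' unburned vertex. If the graph were fully burned before round $k$, no source $v_k$ could be chosen. This case is excluded automatically, because Step 2 requires $v_k\notin B_{k-1}$, so $B_{k-1}\neq V(G)$. I would state this explicitly so that the equivalence also covers sequences of exact length $k$.
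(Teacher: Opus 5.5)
Your argument is correct. It is essentially the reasoning the paper sketches just before quoting this result: a source $v_i$ chosen in round $i$ has burned all of $N_{j-i}[v_i]$ by round $j$, and an already burned vertex cannot later be chosen as a source, which forces $d(v_i,v_j)\geq |i-j|$. The paper itself gives no proof and defers to the survey.

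Your induction proves the exact identity $B_t=\bigcup_{l=1}^{t}N_{t-l}[v_l]$, not merely the inclusion $\supseteq$ implicit in those remarks. That is what lets you read off both directions at once, including the converse and the fact that the process cannot end before round $k$ (since $v_k\notin B_{k-1}$).
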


The neighborhoods of the sources may have overlap, meaning in a burning sequence of length $k$ there might exist $i,j\in [k]$ such that $N_{k-i}[v_i] \cap N_{k-j}[v_j]\neq \emptyset$.
Additionally, consider a burning sequence $S$. If two vertices $v,w\in V(G)$ are burned in round $i$ and there is an edge $\{v,w\}\in E(G)$ connecting the two vertices, the edge has no influence on the burning number as $S$ remains a burning sequence.


The \textit{burning process in a hypergraph} $H$ is defined as follows. 
In each round $t$ an unburned vertex is chosen to become burned and is called a \textit{source}.
An unburned vertex $v$ becomes burned in round $t+1$ if and only if there is a non-singleton hyperedge $h\in E(H)$ with $v\in h$ such that $v$ was not burned, but the other vertices in $h$ are burned in round $t$.
Again, an unburned vertex will remain unburned unless it becomes burned by the burning process on hypergraphs, and once a vertex is burned, it will remain burned for the rest of the process.
The burning number $b(H)$ is the minimal number of rounds needed to burn the entire hypergraph.
The sources that are chosen over time form a burning sequence and the shortest of all possible burning sequences is called optimal.

Consider a hypergraph $H$ where all edges contain exactly two vertices. A vertex $v$ in $H$ becomes burned in the burning process at round $t+1$ if it is the only unburned vertex in a hyperedge. Since every hyperedge consists of exactly two vertices, $v$ becomes burned in round $t+1$ if any of its neighbors is burned in round $t$.
Note that this is exactly the same condition as in the definition of the burning process of a graph.
This means that hypergraph burning actually reduces to the burning of a graph when all edges contain exactly two vertices.
The relation between the burning number and zero forcing number on hypergraphs uses a slightly different notion of the burning process, namely the \textit{lazy burning process}.

A variation of the burning of a hypergraph is the \textit{lazy hypergraph burning}, introduced in \cite{hypergraph}.
Instead of choosing a new source in each round, a set of vertices $B\subset V(H)$ is chosen to burn only in the first round. In subsequent rounds, no other sources are chosen.
The burning of vertices remains the same as in the definition of the burning process in a hypergraph.
The set $B$ is a \textit{lazy burning set} if it eventually burns the entire hypergraph.
The \textit{lazy burning number} is the minimum cardinality of any lazy burning set and is denoted as $b_L(H)$. 
If $B$ is a lazy burning set and $|B|=b_L(H)$, then $B$ is called \textit{optimal}.

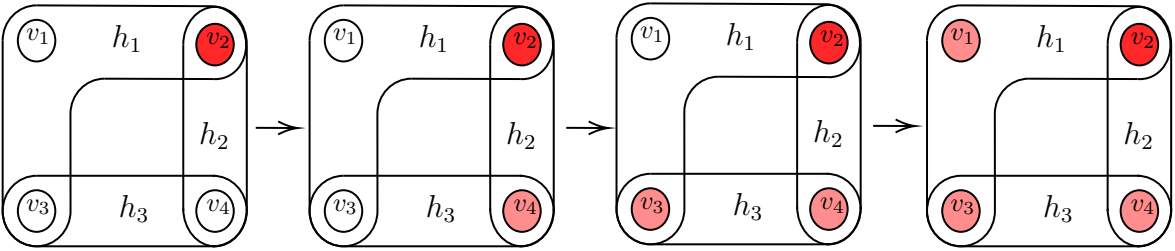
\begin{figure}[htp!]
	\centering

\tikzset{every picture/.style={line width=0.75pt}} 

\begin{tikzpicture}[x=0.75pt,y=0.75pt,yscale=-1,xscale=0.9]

\draw   (12,99.5) .. controls (12,93.7) and (16.7,89) .. (22.5,89) .. controls (28.3,89) and (33,93.7) .. (33,99.5) .. controls (33,105.3) and (28.3,110) .. (22.5,110) .. controls (16.7,110) and (12,105.3) .. (12,99.5) -- cycle ;
\draw   (12,185.5) .. controls (12,179.7) and (16.7,175) .. (22.5,175) .. controls (28.3,175) and (33,179.7) .. (33,185.5) .. controls (33,191.3) and (28.3,196) .. (22.5,196) .. controls (16.7,196) and (12,191.3) .. (12,185.5) -- cycle ;
\draw   (112,185.5) .. controls (112,179.7) and (116.7,175) .. (122.5,175) .. controls (128.3,175) and (133,179.7) .. (133,185.5) .. controls (133,191.3) and (128.3,196) .. (122.5,196) .. controls (116.7,196) and (112,191.3) .. (112,185.5) -- cycle ;
\draw  [fill={rgb, 255:red, 255; green, 0; blue, 0 }  ,fill opacity=0.85 ] (112,101.5) .. controls (112,95.7) and (116.7,91) .. (122.5,91) .. controls (128.3,91) and (133,95.7) .. (133,101.5) .. controls (133,107.3) and (128.3,112) .. (122.5,112) .. controls (116.7,112) and (112,107.3) .. (112,101.5) -- cycle ;
\draw    (22.5,167.7) -- (122.5,167.7) ;
\draw  [draw opacity=0] (121.27,167.7) .. controls (121.27,167.7) and (121.27,167.7) .. (121.27,167.7) .. controls (131.78,167.7) and (140.3,175.67) .. (140.3,185.5) .. controls (140.3,195.33) and (131.78,203.3) .. (121.27,203.3) -- (121.27,185.5) -- cycle ; \draw   (121.27,167.7) .. controls (121.27,167.7) and (121.27,167.7) .. (121.27,167.7) .. controls (131.78,167.7) and (140.3,175.67) .. (140.3,185.5) .. controls (140.3,195.33) and (131.78,203.3) .. (121.27,203.3) ;  
\draw    (22.5,203.3) -- (122.5,203.3) ;
\draw  [draw opacity=0] (140.3,184.27) .. controls (140.3,184.27) and (140.3,184.27) .. (140.3,184.27) .. controls (140.3,194.78) and (132.33,203.3) .. (122.5,203.3) .. controls (112.67,203.3) and (104.7,194.78) .. (104.7,184.27) -- (122.5,184.27) -- cycle ; \draw   (140.3,184.27) .. controls (140.3,184.27) and (140.3,184.27) .. (140.3,184.27) .. controls (140.3,194.78) and (132.33,203.3) .. (122.5,203.3) .. controls (112.67,203.3) and (104.7,194.78) .. (104.7,184.27) ;  
\draw    (104.7,101.5) -- (104.7,185.5) ;
\draw    (140.3,101.5) -- (140.3,185.5) ;
\draw  [draw opacity=0] (22.5,203.3) .. controls (22.5,203.3) and (22.5,203.3) .. (22.5,203.3) .. controls (22.5,203.3) and (22.5,203.3) .. (22.5,203.3) .. controls (11.99,203.3) and (3.48,195.33) .. (3.48,185.5) .. controls (3.48,175.67) and (11.99,167.7) .. (22.5,167.7) -- (22.5,185.5) -- cycle ; \draw   (22.5,203.3) .. controls (22.5,203.3) and (22.5,203.3) .. (22.5,203.3) .. controls (22.5,203.3) and (22.5,203.3) .. (22.5,203.3) .. controls (11.99,203.3) and (3.48,195.33) .. (3.48,185.5) .. controls (3.48,175.67) and (11.99,167.7) .. (22.5,167.7) ;  
\draw  [draw opacity=0] (104.7,101.5) .. controls (104.7,101.5) and (104.7,101.5) .. (104.7,101.5) .. controls (104.7,90.99) and (112.67,82.48) .. (122.5,82.48) .. controls (132.33,82.48) and (140.3,90.99) .. (140.3,101.5) -- (122.5,101.5) -- cycle ; \draw   (104.7,101.5) .. controls (104.7,101.5) and (104.7,101.5) .. (104.7,101.5) .. controls (104.7,90.99) and (112.67,82.48) .. (122.5,82.48) .. controls (132.33,82.48) and (140.3,90.99) .. (140.3,101.5) ;  
\draw  [draw opacity=0] (3.48,99.5) .. controls (3.48,89.67) and (11.99,81.7) .. (22.5,81.7) -- (22.5,99.5) -- cycle ; \draw   (3.48,99.5) .. controls (3.48,89.67) and (11.99,81.7) .. (22.5,81.7) ;  
\draw  [draw opacity=0] (41.52,185.5) .. controls (41.52,195.33) and (33.01,203.3) .. (22.5,203.3) .. controls (11.99,203.3) and (3.48,195.33) .. (3.48,185.5) -- (22.5,185.5) -- cycle ; \draw   (41.52,185.5) .. controls (41.52,195.33) and (33.01,203.3) .. (22.5,203.3) .. controls (11.99,203.3) and (3.48,195.33) .. (3.48,185.5) ;  
\draw    (3.48,99.5) -- (3.48,185.5) ;
\draw  [draw opacity=0] (121.5,82.12) .. controls (126.01,82.42) and (130.53,84.36) .. (134.09,87.91) .. controls (141.52,95.34) and (141.9,107) .. (134.95,113.95) .. controls (131.33,117.58) and (126.42,119.21) .. (121.5,118.88) -- (121.5,100.5) -- cycle ; \draw   (121.5,82.12) .. controls (126.01,82.42) and (130.53,84.36) .. (134.09,87.91) .. controls (141.52,95.34) and (141.9,107) .. (134.95,113.95) .. controls (131.33,117.58) and (126.42,119.21) .. (121.5,118.88) ;  
\draw    (22.5,81.7) -- (121.5,82.12) ;
\draw  [draw opacity=0] (41.48,136.5) .. controls (41.48,126.67) and (49.99,118.7) .. (60.5,118.7) -- (60.5,136.5) -- cycle ; \draw   (41.48,136.5) .. controls (41.48,126.67) and (49.99,118.7) .. (60.5,118.7) ;  
\draw    (41.48,136.5) -- (41.52,185.5) ;
\draw    (60.5,118.7) -- (121.5,118.88) ;
\draw    (145.3,143.5) -- (166.83,143.59) ;
\draw [shift={(168.82,143.6)}, rotate = 180.24] [color={rgb, 255:red, 0; green, 0; blue, 0 }  ][line width=0.75]    (10.93,-3.29) .. controls (6.95,-1.4) and (3.31,-0.3) .. (0,0) .. controls (3.31,0.3) and (6.95,1.4) .. (10.93,3.29)   ;
\draw   (184,99.5) .. controls (184,93.7) and (188.7,89) .. (194.5,89) .. controls (200.3,89) and (205,93.7) .. (205,99.5) .. controls (205,105.3) and (200.3,110) .. (194.5,110) .. controls (188.7,110) and (184,105.3) .. (184,99.5) -- cycle ;
\draw   (184,185.5) .. controls (184,179.7) and (188.7,175) .. (194.5,175) .. controls (200.3,175) and (205,179.7) .. (205,185.5) .. controls (205,191.3) and (200.3,196) .. (194.5,196) .. controls (188.7,196) and (184,191.3) .. (184,185.5) -- cycle ;
\draw  [fill={rgb, 255:red, 255; green, 121; blue, 121 }  ,fill opacity=0.85 ] (284,185.5) .. controls (284,179.7) and (288.7,175) .. (294.5,175) .. controls (300.3,175) and (305,179.7) .. (305,185.5) .. controls (305,191.3) and (300.3,196) .. (294.5,196) .. controls (288.7,196) and (284,191.3) .. (284,185.5) -- cycle ;
\draw  [fill={rgb, 255:red, 255; green, 0; blue, 0 }  ,fill opacity=0.85 ] (284,101.5) .. controls (284,95.7) and (288.7,91) .. (294.5,91) .. controls (300.3,91) and (305,95.7) .. (305,101.5) .. controls (305,107.3) and (300.3,112) .. (294.5,112) .. controls (288.7,112) and (284,107.3) .. (284,101.5) -- cycle ;
\draw    (194.5,167.7) -- (294.5,167.7) ;
\draw  [draw opacity=0] (293.27,167.7) .. controls (293.27,167.7) and (293.27,167.7) .. (293.27,167.7) .. controls (303.78,167.7) and (312.3,175.67) .. (312.3,185.5) .. controls (312.3,195.33) and (303.78,203.3) .. (293.27,203.3) -- (293.27,185.5) -- cycle ; \draw   (293.27,167.7) .. controls (293.27,167.7) and (293.27,167.7) .. (293.27,167.7) .. controls (303.78,167.7) and (312.3,175.67) .. (312.3,185.5) .. controls (312.3,195.33) and (303.78,203.3) .. (293.27,203.3) ;  
\draw    (194.5,203.3) -- (294.5,203.3) ;
\draw  [draw opacity=0] (312.3,184.27) .. controls (312.3,184.27) and (312.3,184.27) .. (312.3,184.27) .. controls (312.3,194.78) and (304.33,203.3) .. (294.5,203.3) .. controls (284.67,203.3) and (276.7,194.78) .. (276.7,184.27) -- (294.5,184.27) -- cycle ; \draw   (312.3,184.27) .. controls (312.3,184.27) and (312.3,184.27) .. (312.3,184.27) .. controls (312.3,194.78) and (304.33,203.3) .. (294.5,203.3) .. controls (284.67,203.3) and (276.7,194.78) .. (276.7,184.27) ;  
\draw    (276.7,101.5) -- (276.7,185.5) ;
\draw    (312.3,101.5) -- (312.3,185.5) ;
\draw  [draw opacity=0] (194.5,203.3) .. controls (194.5,203.3) and (194.5,203.3) .. (194.5,203.3) .. controls (183.99,203.3) and (175.48,195.33) .. (175.48,185.5) .. controls (175.48,175.67) and (183.99,167.7) .. (194.5,167.7) -- (194.5,185.5) -- cycle ; \draw   (194.5,203.3) .. controls (194.5,203.3) and (194.5,203.3) .. (194.5,203.3) .. controls (183.99,203.3) and (175.48,195.33) .. (175.48,185.5) .. controls (175.48,175.67) and (183.99,167.7) .. (194.5,167.7) ;  
\draw  [draw opacity=0] (276.7,101.5) .. controls (276.7,101.5) and (276.7,101.5) .. (276.7,101.5) .. controls (276.7,90.99) and (284.67,82.48) .. (294.5,82.48) .. controls (304.33,82.48) and (312.3,90.99) .. (312.3,101.5) -- (294.5,101.5) -- cycle ; \draw   (276.7,101.5) .. controls (276.7,101.5) and (276.7,101.5) .. (276.7,101.5) .. controls (276.7,90.99) and (284.67,82.48) .. (294.5,82.48) .. controls (304.33,82.48) and (312.3,90.99) .. (312.3,101.5) ;  
\draw  [draw opacity=0] (175.48,99.5) .. controls (175.48,89.67) and (183.99,81.7) .. (194.5,81.7) -- (194.5,99.5) -- cycle ; \draw   (175.48,99.5) .. controls (175.48,89.67) and (183.99,81.7) .. (194.5,81.7) ;  
\draw  [draw opacity=0] (213.52,185.5) .. controls (213.52,195.33) and (205.01,203.3) .. (194.5,203.3) .. controls (183.99,203.3) and (175.48,195.33) .. (175.48,185.5) -- (194.5,185.5) -- cycle ; \draw   (213.52,185.5) .. controls (213.52,195.33) and (205.01,203.3) .. (194.5,203.3) .. controls (183.99,203.3) and (175.48,195.33) .. (175.48,185.5) ;  
\draw    (175.48,99.5) -- (175.48,185.5) ;
\draw  [draw opacity=0] (293.5,82.12) .. controls (298.01,82.42) and (302.53,84.36) .. (306.09,87.91) .. controls (313.52,95.34) and (313.9,107) .. (306.95,113.95) .. controls (303.33,117.58) and (298.42,119.21) .. (293.5,118.88) -- (293.5,100.5) -- cycle ; \draw   (293.5,82.12) .. controls (298.01,82.42) and (302.53,84.36) .. (306.09,87.91) .. controls (313.52,95.34) and (313.9,107) .. (306.95,113.95) .. controls (303.33,117.58) and (298.42,119.21) .. (293.5,118.88) ;  
\draw    (194.5,81.7) -- (293.5,82.12) ;
\draw  [draw opacity=0] (213.48,136.5) .. controls (213.48,126.67) and (221.99,118.7) .. (232.5,118.7) -- (232.5,136.5) -- cycle ; \draw   (213.48,136.5) .. controls (213.48,126.67) and (221.99,118.7) .. (232.5,118.7) ;  
\draw    (213.48,136.5) -- (213.52,185.5) ;
\draw    (232.5,118.7) -- (293.5,118.88) ;
\draw   (356,98.5) .. controls (356,92.7) and (360.7,88) .. (366.5,88) .. controls (372.3,88) and (377,92.7) .. (377,98.5) .. controls (377,104.3) and (372.3,109) .. (366.5,109) .. controls (360.7,109) and (356,104.3) .. (356,98.5) -- cycle ;
\draw  [fill={rgb, 255:red, 255; green, 121; blue, 121 }  ,fill opacity=0.85 ] (356,184.5) .. controls (356,178.7) and (360.7,174) .. (366.5,174) .. controls (372.3,174) and (377,178.7) .. (377,184.5) .. controls (377,190.3) and (372.3,195) .. (366.5,195) .. controls (360.7,195) and (356,190.3) .. (356,184.5) -- cycle ;
\draw  [fill={rgb, 255:red, 255; green, 121; blue, 121 }  ,fill opacity=0.85 ] (456,184.5) .. controls (456,178.7) and (460.7,174) .. (466.5,174) .. controls (472.3,174) and (477,178.7) .. (477,184.5) .. controls (477,190.3) and (472.3,195) .. (466.5,195) .. controls (460.7,195) and (456,190.3) .. (456,184.5) -- cycle ;
\draw  [fill={rgb, 255:red, 255; green, 0; blue, 0 }  ,fill opacity=0.85 ] (456,100.5) .. controls (456,94.7) and (460.7,90) .. (466.5,90) .. controls (472.3,90) and (477,94.7) .. (477,100.5) .. controls (477,106.3) and (472.3,111) .. (466.5,111) .. controls (460.7,111) and (456,106.3) .. (456,100.5) -- cycle ;
\draw    (366.5,166.7) -- (466.5,166.7) ;
\draw  [draw opacity=0] (465.27,166.7) .. controls (465.27,166.7) and (465.27,166.7) .. (465.27,166.7) .. controls (475.78,166.7) and (484.3,174.67) .. (484.3,184.5) .. controls (484.3,194.33) and (475.78,202.3) .. (465.27,202.3) -- (465.27,184.5) -- cycle ; \draw   (465.27,166.7) .. controls (465.27,166.7) and (465.27,166.7) .. (465.27,166.7) .. controls (475.78,166.7) and (484.3,174.67) .. (484.3,184.5) .. controls (484.3,194.33) and (475.78,202.3) .. (465.27,202.3) ;  
\draw    (366.5,202.3) -- (466.5,202.3) ;
\draw  [draw opacity=0] (484.3,183.27) .. controls (484.3,183.27) and (484.3,183.27) .. (484.3,183.27) .. controls (484.3,193.78) and (476.33,202.3) .. (466.5,202.3) .. controls (456.67,202.3) and (448.7,193.78) .. (448.7,183.27) -- (466.5,183.27) -- cycle ; \draw   (484.3,183.27) .. controls (484.3,183.27) and (484.3,183.27) .. (484.3,183.27) .. controls (484.3,193.78) and (476.33,202.3) .. (466.5,202.3) .. controls (456.67,202.3) and (448.7,193.78) .. (448.7,183.27) ;  
\draw    (448.7,100.5) -- (448.7,184.5) ;
\draw    (484.3,100.5) -- (484.3,184.5) ;
\draw  [draw opacity=0] (366.5,202.3) .. controls (366.5,202.3) and (366.5,202.3) .. (366.5,202.3) .. controls (355.99,202.3) and (347.48,194.33) .. (347.48,184.5) .. controls (347.48,174.67) and (355.99,166.7) .. (366.5,166.7) -- (366.5,184.5) -- cycle ; \draw   (366.5,202.3) .. controls (366.5,202.3) and (366.5,202.3) .. (366.5,202.3) .. controls (355.99,202.3) and (347.48,194.33) .. (347.48,184.5) .. controls (347.48,174.67) and (355.99,166.7) .. (366.5,166.7) ;  
\draw  [draw opacity=0] (448.7,100.5) .. controls (448.7,100.5) and (448.7,100.5) .. (448.7,100.5) .. controls (448.7,89.99) and (456.67,81.48) .. (466.5,81.48) .. controls (476.33,81.48) and (484.3,89.99) .. (484.3,100.5) -- (466.5,100.5) -- cycle ; \draw   (448.7,100.5) .. controls (448.7,100.5) and (448.7,100.5) .. (448.7,100.5) .. controls (448.7,89.99) and (456.67,81.48) .. (466.5,81.48) .. controls (476.33,81.48) and (484.3,89.99) .. (484.3,100.5) ;  
\draw  [draw opacity=0] (347.48,98.5) .. controls (347.48,88.67) and (355.99,80.7) .. (366.5,80.7) -- (366.5,98.5) -- cycle ; \draw   (347.48,98.5) .. controls (347.48,88.67) and (355.99,80.7) .. (366.5,80.7) ;  
\draw  [draw opacity=0] (385.52,184.5) .. controls (385.52,194.33) and (377.01,202.3) .. (366.5,202.3) .. controls (355.99,202.3) and (347.48,194.33) .. (347.48,184.5) -- (366.5,184.5) -- cycle ; \draw   (385.52,184.5) .. controls (385.52,194.33) and (377.01,202.3) .. (366.5,202.3) .. controls (355.99,202.3) and (347.48,194.33) .. (347.48,184.5) ;  
\draw    (347.48,98.5) -- (347.48,184.5) ;
\draw  [draw opacity=0] (465.5,81.12) .. controls (470.01,81.42) and (474.53,83.36) .. (478.09,86.91) .. controls (485.52,94.34) and (485.9,106) .. (478.95,112.95) .. controls (475.33,116.58) and (470.42,118.21) .. (465.5,117.88) -- (465.5,99.5) -- cycle ; \draw   (465.5,81.12) .. controls (470.01,81.42) and (474.53,83.36) .. (478.09,86.91) .. controls (485.52,94.34) and (485.9,106) .. (478.95,112.95) .. controls (475.33,116.58) and (470.42,118.21) .. (465.5,117.88) ;  
\draw    (366.5,80.7) -- (465.5,81.12) ;
\draw  [draw opacity=0] (385.48,135.5) .. controls (385.48,125.67) and (393.99,117.7) .. (404.5,117.7) -- (404.5,135.5) -- cycle ; \draw   (385.48,135.5) .. controls (385.48,125.67) and (393.99,117.7) .. (404.5,117.7) ;  
\draw    (385.48,135.5) -- (385.52,184.5) ;
\draw    (404.5,117.7) -- (465.5,117.88) ;
\draw  [fill={rgb, 255:red, 255; green, 121; blue, 121 }  ,fill opacity=0.85 ] (530,99.5) .. controls (530,93.7) and (534.7,89) .. (540.5,89) .. controls (546.3,89) and (551,93.7) .. (551,99.5) .. controls (551,105.3) and (546.3,110) .. (540.5,110) .. controls (534.7,110) and (530,105.3) .. (530,99.5) -- cycle ;
\draw  [fill={rgb, 255:red, 255; green, 121; blue, 121 }  ,fill opacity=0.85 ] (530,185.5) .. controls (530,179.7) and (534.7,175) .. (540.5,175) .. controls (546.3,175) and (551,179.7) .. (551,185.5) .. controls (551,191.3) and (546.3,196) .. (540.5,196) .. controls (534.7,196) and (530,191.3) .. (530,185.5) -- cycle ;
\draw  [fill={rgb, 255:red, 255; green, 121; blue, 121 }  ,fill opacity=0.85 ] (630,185.5) .. controls (630,179.7) and (634.7,175) .. (640.5,175) .. controls (646.3,175) and (651,179.7) .. (651,185.5) .. controls (651,191.3) and (646.3,196) .. (640.5,196) .. controls (634.7,196) and (630,191.3) .. (630,185.5) -- cycle ;
\draw  [fill={rgb, 255:red, 255; green, 0; blue, 0 }  ,fill opacity=0.85 ] (630,101.5) .. controls (630,95.7) and (634.7,91) .. (640.5,91) .. controls (646.3,91) and (651,95.7) .. (651,101.5) .. controls (651,107.3) and (646.3,112) .. (640.5,112) .. controls (634.7,112) and (630,107.3) .. (630,101.5) -- cycle ;
\draw    (540.5,167.7) -- (640.5,167.7) ;
\draw  [draw opacity=0] (639.27,167.7) .. controls (639.27,167.7) and (639.27,167.7) .. (639.27,167.7) .. controls (649.78,167.7) and (658.3,175.67) .. (658.3,185.5) .. controls (658.3,195.33) and (649.78,203.3) .. (639.27,203.3) -- (639.27,185.5) -- cycle ; \draw   (639.27,167.7) .. controls (639.27,167.7) and (639.27,167.7) .. (639.27,167.7) .. controls (649.78,167.7) and (658.3,175.67) .. (658.3,185.5) .. controls (658.3,195.33) and (649.78,203.3) .. (639.27,203.3) ;  
\draw    (540.5,203.3) -- (640.5,203.3) ;
\draw  [draw opacity=0] (658.3,184.27) .. controls (658.3,184.27) and (658.3,184.27) .. (658.3,184.27) .. controls (658.3,194.78) and (650.33,203.3) .. (640.5,203.3) .. controls (630.67,203.3) and (622.7,194.78) .. (622.7,184.27) -- (640.5,184.27) -- cycle ; \draw   (658.3,184.27) .. controls (658.3,184.27) and (658.3,184.27) .. (658.3,184.27) .. controls (658.3,194.78) and (650.33,203.3) .. (640.5,203.3) .. controls (630.67,203.3) and (622.7,194.78) .. (622.7,184.27) ;  
\draw    (622.7,101.5) -- (622.7,185.5) ;
\draw    (658.3,101.5) -- (658.3,185.5) ;
\draw  [draw opacity=0] (540.5,203.3) .. controls (540.5,203.3) and (540.5,203.3) .. (540.5,203.3) .. controls (529.99,203.3) and (521.48,195.33) .. (521.48,185.5) .. controls (521.48,175.67) and (529.99,167.7) .. (540.5,167.7) -- (540.5,185.5) -- cycle ; \draw   (540.5,203.3) .. controls (540.5,203.3) and (540.5,203.3) .. (540.5,203.3) .. controls (529.99,203.3) and (521.48,195.33) .. (521.48,185.5) .. controls (521.48,175.67) and (529.99,167.7) .. (540.5,167.7) ;  
\draw  [draw opacity=0] (622.7,101.5) .. controls (622.7,101.5) and (622.7,101.5) .. (622.7,101.5) .. controls (622.7,90.99) and (630.67,82.48) .. (640.5,82.48) .. controls (650.33,82.48) and (658.3,90.99) .. (658.3,101.5) -- (640.5,101.5) -- cycle ; \draw   (622.7,101.5) .. controls (622.7,101.5) and (622.7,101.5) .. (622.7,101.5) .. controls (622.7,90.99) and (630.67,82.48) .. (640.5,82.48) .. controls (650.33,82.48) and (658.3,90.99) .. (658.3,101.5) ;  
\draw  [draw opacity=0] (521.48,99.5) .. controls (521.48,89.67) and (529.99,81.7) .. (540.5,81.7) -- (540.5,99.5) -- cycle ; \draw   (521.48,99.5) .. controls (521.48,89.67) and (529.99,81.7) .. (540.5,81.7) ;  
\draw  [draw opacity=0] (559.52,185.5) .. controls (559.52,195.33) and (551.01,203.3) .. (540.5,203.3) .. controls (529.99,203.3) and (521.48,195.33) .. (521.48,185.5) -- (540.5,185.5) -- cycle ; \draw   (559.52,185.5) .. controls (559.52,195.33) and (551.01,203.3) .. (540.5,203.3) .. controls (529.99,203.3) and (521.48,195.33) .. (521.48,185.5) ;  
\draw    (521.48,99.5) -- (521.48,185.5) ;
\draw  [draw opacity=0] (639.5,82.12) .. controls (644.01,82.42) and (648.53,84.36) .. (652.09,87.91) .. controls (659.52,95.34) and (659.9,107) .. (652.95,113.95) .. controls (649.33,117.58) and (644.42,119.21) .. (639.5,118.88) -- (639.5,100.5) -- cycle ; \draw   (639.5,82.12) .. controls (644.01,82.42) and (648.53,84.36) .. (652.09,87.91) .. controls (659.52,95.34) and (659.9,107) .. (652.95,113.95) .. controls (649.33,117.58) and (644.42,119.21) .. (639.5,118.88) ;  
\draw    (540.5,81.7) -- (639.5,82.12) ;
\draw  [draw opacity=0] (559.48,136.5) .. controls (559.48,126.67) and (567.99,118.7) .. (578.5,118.7) -- (578.5,136.5) -- cycle ; \draw   (559.48,136.5) .. controls (559.48,126.67) and (567.99,118.7) .. (578.5,118.7) ;  
\draw    (559.48,136.5) -- (559.52,185.5) ;
\draw    (578.5,118.7) -- (639.5,118.88) ;
\draw    (319.3,143.5) -- (340.83,143.59) ;
\draw [shift={(342.82,143.6)}, rotate = 180.24] [color={rgb, 255:red, 0; green, 0; blue, 0 }  ][line width=0.75]    (10.93,-3.29) .. controls (6.95,-1.4) and (3.31,-0.3) .. (0,0) .. controls (3.31,0.3) and (6.95,1.4) .. (10.93,3.29)   ;
\draw    (491.3,142.5) -- (512.83,142.59) ;
\draw [shift={(514.82,142.6)}, rotate = 180.24] [color={rgb, 255:red, 0; green, 0; blue, 0 }  ][line width=0.75]    (10.93,-3.29) .. controls (6.95,-1.4) and (3.31,-0.3) .. (0,0) .. controls (3.31,0.3) and (6.95,1.4) .. (10.93,3.29)   ;

\draw (15,178) node [anchor=north west][inner sep=0.75pt]  [font=\footnotesize] [align=left] {$\displaystyle v_{3}$};
\draw (116,178) node [anchor=north west][inner sep=0.75pt]  [font=\footnotesize] [align=left] {$\displaystyle v_{4}$};
\draw (15,92) node [anchor=north west][inner sep=0.75pt]  [font=\footnotesize] [align=left] {$\displaystyle v_{1}$};
\draw (116,94) node [anchor=north west][inner sep=0.75pt]  [font=\footnotesize] [align=left] {$\displaystyle v_{2}$};
\draw (63,90) node [anchor=north west][inner sep=0.75pt]   [align=left] {$\displaystyle h_{1}$};
\draw (112,139) node [anchor=north west][inner sep=0.75pt]   [align=left] {$\displaystyle h_{2}$};
\draw (67,176) node [anchor=north west][inner sep=0.75pt]   [align=left] {$\displaystyle h_{3}$};
\draw (187,178) node [anchor=north west][inner sep=0.75pt]  [font=\footnotesize] [align=left] {$\displaystyle v_{3}$};
\draw (288,178) node [anchor=north west][inner sep=0.75pt]  [font=\footnotesize] [align=left] {$\displaystyle v_{4}$};
\draw (187,92) node [anchor=north west][inner sep=0.75pt]  [font=\footnotesize] [align=left] {$\displaystyle v_{1}$};
\draw (288,94) node [anchor=north west][inner sep=0.75pt]  [font=\footnotesize] [align=left] {$\displaystyle v_{2}$};
\draw (235,90) node [anchor=north west][inner sep=0.75pt]   [align=left] {$\displaystyle h_{1}$};
\draw (284,139) node [anchor=north west][inner sep=0.75pt]   [align=left] {$\displaystyle h_{2}$};
\draw (239,176) node [anchor=north west][inner sep=0.75pt]   [align=left] {$\displaystyle h_{3}$};
\draw (359,177) node [anchor=north west][inner sep=0.75pt]  [font=\footnotesize] [align=left] {$\displaystyle v_{3}$};
\draw (460,177) node [anchor=north west][inner sep=0.75pt]  [font=\footnotesize] [align=left] {$\displaystyle v_{4}$};
\draw (359,91) node [anchor=north west][inner sep=0.75pt]  [font=\footnotesize] [align=left] {$\displaystyle v_{1}$};
\draw (460,93) node [anchor=north west][inner sep=0.75pt]  [font=\footnotesize] [align=left] {$\displaystyle v_{2}$};
\draw (407,89) node [anchor=north west][inner sep=0.75pt]   [align=left] {$\displaystyle h_{1}$};
\draw (456,138) node [anchor=north west][inner sep=0.75pt]   [align=left] {$\displaystyle h_{2}$};
\draw (411,175) node [anchor=north west][inner sep=0.75pt]   [align=left] {$\displaystyle h_{3}$};
\draw (533,178) node [anchor=north west][inner sep=0.75pt]  [font=\footnotesize] [align=left] {$\displaystyle v_{3}$};
\draw (634,178) node [anchor=north west][inner sep=0.75pt]  [font=\footnotesize] [align=left] {$\displaystyle v_{4}$};
\draw (533,92) node [anchor=north west][inner sep=0.75pt]  [font=\footnotesize] [align=left] {$\displaystyle v_{1}$};
\draw (634,94) node [anchor=north west][inner sep=0.75pt]  [font=\footnotesize] [align=left] {$\displaystyle v_{2}$};
\draw (581,90) node [anchor=north west][inner sep=0.75pt]   [align=left] {$\displaystyle h_{1}$};
\draw (630,139) node [anchor=north west][inner sep=0.75pt]   [align=left] {$\displaystyle h_{2}$};
\draw (585,176) node [anchor=north west][inner sep=0.75pt]   [align=left] {$\displaystyle h_{3}$};

\end{tikzpicture}

	\caption{The lazy burning process on a hypergraph.}
	\label{fig:lazy-burning-process-hypergraph}
\end{figure}

\begin{example}
	Consider the hypergraph $H$ in Figure~\ref{fig:hypergraph-ex}.
	If we take $B=\{v_2\}$, then vertex $v_4$ is burned by the hyperedge $h_2$. In the next round $v_3$ is burned by $h_3$ and in the round thereafter $v_1$ is burned by $h_1$. Since all hyperedges link at least two vertices, at least one initial burned vertex is needed to burn the hypergraph. Hence, in this case, $b_L(H) = 1$.
	This lazy burning process is illustrated in Figure~\ref{fig:lazy-burning-process-hypergraph}.
\end{example}

Using a similar reasoning as to why the hypergraph burning process reduces to the graph burning process, the lazy burning process almost reduces to the graph burning process if all hyperedges of a hypergraph $H$ contain exactly two vertices.
The difference, as explained before, is that sources do not appear each round, but a set of vertices is chosen to be burned in the first round already in the lazy burning process.
As a result of this, let $\{H_i\}_{1\leq i\leq k}$ be the set of the $k$ connected components of $H$, then $b_L(H_i)=1$ for each $1\leq i \leq k$ and thus $b_L(H)=\sum_{i=1}^k b_L(H_i)= k$.

Additionally, it is easy to see that the lazy burning number is a lower bound of the burning number of a hypergraph. 
Indeed, consider any hypergraph $H$ and let $(v_1,v_2,\dots,v_k)$ be an optimal burning sequence in $H$. Then, since this sequence burns all the vertices of $H$, the set $B=\{v_1,v_2,\dots,v_k\}$ is a lazy burning set. Some vertices in $B$ may be redundant to burn the entire hypergraph. This leads to the following inequality:

\begin{equation}
	b_L(H) \leq b(H).
	\label{eq:burning number vs lazy burning number}
\end{equation}

\section{Connection between the burning and the zero forcing processes via hypergraphs}\label{sec:burning}

The principal goal of this section is to show a sharp lower bound on the lazy burning number of a hypergraphs in terms of the zero forcing of the incidence graph of such hypergraph. Our result builds on work by Bonato et al \cite{hypergraph}, who recently provided a characterization of lazy burning sets in hypergraphs via zero forcing sets in their incidence graphs.





\begin{theorem}{\cite[Theorem 25]{hypergraph}}
   For a hypergraph $H$, a subset $B\subset V(H)$ is a lazy burning set for $H$ if and only if $B\cup E(H)$ is a zero forcing set for $IG(H)$. Then,
   \begin{equation}\label{eq:hypergraph inequality}
           Z(IG(H)) \leq b_L(H) + |E(H)|.
   \end{equation}
   \label{thm:hypergraph bound}
\end{theorem}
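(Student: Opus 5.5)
The plan is to prove the equivalence by simulating each process with the other, step by step, in $IG(H)$. We start every simulation from the coloring where $B\cup E(H)$ is blue. The inequality then follows at once. If $B$ is an optimal lazy burning set, the equivalence makes $B\cup E(H)$ a zero forcing set of $IG(H)$, and this set has size $b_L(H)+|E(H)|$. Hence $Z(IG(H))\leq b_L(H)+|E(H)|$.

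\emph{Key observation.} Every hyperedge-vertex $h$ of $IG(H)$ is blue from the start, because all of $E(H)$ is blue. So the only forces that can ever occur are of two kinds:
\begin{itemize}
\item a hyperedge $h$ forcing its unique white neighbour $v\in h$;
\item a vertex $u\in V(H)$ forcing a white neighbour. This never happens, since all neighbours of $u$ are hyperedges and these are already blue.
\end{itemize}
Thus a force $h\to v$ is available exactly when all vertices of $h$ other than $v$ are blue and $v$ is white. This is precisely the condition for $v$ to burn through $h$ in the lazy hypergraph burning process, provided $h$ is non-singleton.

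\emph{Singleton hyperedges.} These are the one technical point to check. If $h=\{v\}$, then in $IG(H)$ the vertex $h$ has the single neighbour $v$. If $v$ is white, $h$ forces $v$, whereas the burning rule excludes singleton edges. I would handle this in one of two ways. The first option is to note that the cited statement implicitly treats singletons consistently, by following the convention of \cite{hypergraph}. The second is to observe that the two processes then differ only on vertices lying in singleton edges, and to adjust the argument accordingly. For the stated inequality this discrepancy does no harm: any lazy burning set $B$ still yields a zero forcing set $B\cup E(H)$. The extra forces from singletons only help, because zero forcing closures are monotone.

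\emph{($\Rightarrow$)} Assume $B$ is a lazy burning set. By induction on the rounds of burning, the set of burned vertices after round $t$ is contained in the blue set reachable from $B\cup E(H)$. Each time a vertex burns via a hyperedge $h$, the corresponding force $h\to v$ is valid at that moment, or $v$ is already blue. So all of $V(H)\cup E(H)$ eventually becomes blue.

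\emph{($\Leftarrow$)} Take a chronological list of forces from $B\cup E(H)$. By the key observation each force has the form $h\to v$. By induction along the list, each $v$ burns no later than the time its force occurs, so $B$ burns all of $V(H)$, modulo the singleton issue above.

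The main obstacle is the singleton-edge mismatch. Everything else is a direct translation between the two processes.
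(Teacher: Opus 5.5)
Your proof of the inequality is complete. The key observation is that once $E(H)$ is blue, no vertex of $V(H)$ can ever force, and $h$ forces $v$ exactly when $h\setminus\{v\}$ is blue and $v$ is white. Together with monotonicity of the forcing closure, this gives ($\Rightarrow$) for every hypergraph, singleton edges included, and the bound $Z(IG(H))\le b_L(H)+|E(H)|$ uses only that direction. (The paper gives no proof of its own here; it quotes \cite{hypergraph}, and your simulation is the natural argument.)

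The gap is in ($\Leftarrow$). You defer the singleton issue to an unspecified convention or to an argument ``adjusted accordingly''. But with the burning rule as this paper defines it (only non-singleton hyperedges spread fire), the reverse implication is false, so no adjustment can prove it. Take $V(H)=\{u,v\}$ and $E(H)=\{h_1,h_2\}$ with $h_1=\{u\}$, $h_2=\{u,v\}$, and let $B=\emptyset$. Then $IG(H)$ is the path $h_1\,u\,h_2\,v$. The forces $h_1\to u$ and then $h_2\to v$ show that $B\cup E(H)=E(H)$ is a zero forcing set, while $\emptyset$ burns nothing. The same example refutes your claim that the two processes differ only on vertices lying in singleton edges. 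Here $v$ lies in no singleton edge, yet it is forced and never burned, because the extra forces cascade through non-singleton edges.

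What your simulation does prove is a corrected equivalence. Let $V_s$ be the set of vertices lying in some singleton edge. The final blue set does not depend on the order of forces, so you may perform all forces $\{w\}\to w$ first. Afterwards no singleton edge has a white neighbor, and every remaining force comes from a non-singleton edge and is exactly a burning step. Hence $B\cup E(H)$ is a zero forcing set of $IG(H)$ if and only if $B\cup V_s$ is a lazy burning set of $H$.

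In particular, the equivalence as stated holds whenever $H$ has no singleton edges, since then $V_s=\emptyset$. It also holds under the convention that a singleton edge $\{w\}$ burns $w$. That convention is the reading implicit in the case $B_i=\emptyset$ of the proof of Theorem~\ref{thm:improved bound}. Under it, your key observation is an exact match, and both directions go through verbatim. You should commit to one of these hypotheses explicitly rather than leaving the reverse direction ``modulo'' the singleton issue.
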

\vspace{-0.9cm}

\begin{example}
    Consider the hypergraph $H$ with its incidence graph $IG(H)$ in Figure~\ref{fig:hypergraph-ex} and Figure \ref{fig:incidence-graph-ex}, respectively. It is straightforward to check that the zero forcing number cannot be $1$. 
    Now, take the set $B=\{v_1,v_2\}$ for the zero forcing process. Then the following zero forcing chain is obtained $(v_1\rightarrow h_1,h_1\rightarrow v_3,v_2\rightarrow h_2,v_3\rightarrow h_3,h_2\rightarrow v_4)$, which means that $B$ is a zero forcing set, and we can conclude that $Z(IG(H))=2$. 
    Since the lazy burning number is $b_L(H)=1$, then inequality~\eqref{eq:hypergraph inequality} gives $2\leq 1+3$.
\end{example}

Consider again the hypergraph $H$ with incidence graph $IG(H)$ shown in Figure~\ref{fig:incidence-graph-ex}. 
Following the construction of the inequality in Theorem~\ref{thm:hypergraph bound}, the set $B=\{v_2\}\cup E(H)$ is a zero forcing set of $IG(H)$ as $\{v_2\}$ was a lazy burning set.
Observe that $v_2$ is only connected to vertices in $E(H)$ which are all colored already. 
However, by Lemma~\ref{lemma:zfp-neighbor-uncolored}, $B$ is not a minimum zero forcing set. Using a similar argument as in the proof of Lemma~\ref{lemma:zfp-neighbor-uncolored}, a vertex in $E(H)$ that is connected to $v_2$ can be removed and a smaller zero forcing set is obtained. Consider for example the set $B'=B\backslash \{h_2\}$. Then it can easily be verified that this is indeed a zero forcing set. 
Hence, the relation $Z(IG(H))\leq b_L(H) + |E(H)|-1$ holds for this graph.
This is shown in Figure~\ref{fig:smaller-zero-forcing-set}. 
This means that we can construct a smaller zero forcing set for the incidence graph $IG(H)$ then in the proof of Theorem~\ref{thm:hypergraph bound}.

\begin{figure}[htp!]
    \centering

\tikzset{every picture/.style={line width=0.75pt}} 

\begin{tikzpicture}[x=0.75pt,y=0.75pt,yscale=-1,xscale=1]

\draw   (130,71.5) .. controls (130,65.7) and (134.7,61) .. (140.5,61) .. controls (146.3,61) and (151,65.7) .. (151,71.5) .. controls (151,77.3) and (146.3,82) .. (140.5,82) .. controls (134.7,82) and (130,77.3) .. (130,71.5) -- cycle ;
\draw   (130,115.5) .. controls (130,109.7) and (134.7,105) .. (140.5,105) .. controls (146.3,105) and (151,109.7) .. (151,115.5) .. controls (151,121.3) and (146.3,126) .. (140.5,126) .. controls (134.7,126) and (130,121.3) .. (130,115.5) -- cycle ;
\draw   (130,159.5) .. controls (130,153.7) and (134.7,149) .. (140.5,149) .. controls (146.3,149) and (151,153.7) .. (151,159.5) .. controls (151,165.3) and (146.3,170) .. (140.5,170) .. controls (134.7,170) and (130,165.3) .. (130,159.5) -- cycle ;
\draw  [fill={rgb, 255:red, 74; green, 144; blue, 226 }  ,fill opacity=0.85 ] (130,207.5) .. controls (130,201.7) and (134.7,197) .. (140.5,197) .. controls (146.3,197) and (151,201.7) .. (151,207.5) .. controls (151,213.3) and (146.3,218) .. (140.5,218) .. controls (134.7,218) and (130,213.3) .. (130,207.5) -- cycle ;
\draw  [fill={rgb, 255:red, 74; green, 144; blue, 226 }  ,fill opacity=0.85 ] (219,90.5) .. controls (219,84.7) and (223.7,80) .. (229.5,80) .. controls (235.3,80) and (240,84.7) .. (240,90.5) .. controls (240,96.3) and (235.3,101) .. (229.5,101) .. controls (223.7,101) and (219,96.3) .. (219,90.5) -- cycle ;
\draw  [fill={rgb, 255:red, 74; green, 144; blue, 226 }  ,fill opacity=0.85 ] (219,142.5) .. controls (219,136.7) and (223.7,132) .. (229.5,132) .. controls (235.3,132) and (240,136.7) .. (240,142.5) .. controls (240,148.3) and (235.3,153) .. (229.5,153) .. controls (223.7,153) and (219,148.3) .. (219,142.5) -- cycle ;
\draw   (219,198.5) .. controls (219,192.7) and (223.7,188) .. (229.5,188) .. controls (235.3,188) and (240,192.7) .. (240,198.5) .. controls (240,204.3) and (235.3,209) .. (229.5,209) .. controls (223.7,209) and (219,204.3) .. (219,198.5) -- cycle ;
\draw [fill={rgb, 255:red, 74; green, 144; blue, 226 }  ,fill opacity=0.85 ]   (151,71.5) -- (219,90.5) ;
\draw [fill={rgb, 255:red, 74; green, 144; blue, 226 }  ,fill opacity=0.85 ]   (151,115.5) -- (219,90.5) ;
\draw [fill={rgb, 255:red, 74; green, 144; blue, 226 }  ,fill opacity=0.85 ]   (151,159.5) -- (219,90.5) ;
\draw [fill={rgb, 255:red, 74; green, 144; blue, 226 }  ,fill opacity=0.85 ]   (151,115.5) -- (219,142.5) ;
\draw [fill={rgb, 255:red, 74; green, 144; blue, 226 }  ,fill opacity=0.85 ]   (151,159.5) -- (219,198.5) ;
\draw [fill={rgb, 255:red, 74; green, 144; blue, 226 }  ,fill opacity=0.85 ]   (151,207.5) -- (219,198.5) ;
\draw [fill={rgb, 255:red, 74; green, 144; blue, 226 }  ,fill opacity=0.85 ]   (151,207.5) -- (219,142.5) ;
\draw    (260,142.5) -- (302,142.6) ;
\draw [shift={(304,142.6)}, rotate = 180.13] [color={rgb, 255:red, 0; green, 0; blue, 0 }  ][line width=0.75]    (10.93,-3.29) .. controls (6.95,-1.4) and (3.31,-0.3) .. (0,0) .. controls (3.31,0.3) and (6.95,1.4) .. (10.93,3.29)   ;
\draw   (322,72.5) .. controls (322,66.7) and (326.7,62) .. (332.5,62) .. controls (338.3,62) and (343,66.7) .. (343,72.5) .. controls (343,78.3) and (338.3,83) .. (332.5,83) .. controls (326.7,83) and (322,78.3) .. (322,72.5) -- cycle ;
\draw  [fill={rgb, 255:red, 0; green, 255; blue, 118 }  ,fill opacity=1 ] (322,116.5) .. controls (322,110.7) and (326.7,106) .. (332.5,106) .. controls (338.3,106) and (343,110.7) .. (343,116.5) .. controls (343,122.3) and (338.3,127) .. (332.5,127) .. controls (326.7,127) and (322,122.3) .. (322,116.5) -- cycle ;
\draw   (322,160.5) .. controls (322,154.7) and (326.7,150) .. (332.5,150) .. controls (338.3,150) and (343,154.7) .. (343,160.5) .. controls (343,166.3) and (338.3,171) .. (332.5,171) .. controls (326.7,171) and (322,166.3) .. (322,160.5) -- cycle ;
\draw  [fill={rgb, 255:red, 74; green, 144; blue, 226 }  ,fill opacity=0.85 ] (322,208.5) .. controls (322,202.7) and (326.7,198) .. (332.5,198) .. controls (338.3,198) and (343,202.7) .. (343,208.5) .. controls (343,214.3) and (338.3,219) .. (332.5,219) .. controls (326.7,219) and (322,214.3) .. (322,208.5) -- cycle ;
\draw  [fill={rgb, 255:red, 74; green, 144; blue, 226 }  ,fill opacity=0.85 ] (411,91.5) .. controls (411,85.7) and (415.7,81) .. (421.5,81) .. controls (427.3,81) and (432,85.7) .. (432,91.5) .. controls (432,97.3) and (427.3,102) .. (421.5,102) .. controls (415.7,102) and (411,97.3) .. (411,91.5) -- cycle ;
\draw  [fill={rgb, 255:red, 74; green, 144; blue, 226 }  ,fill opacity=0.85 ] (411,143.5) .. controls (411,137.7) and (415.7,133) .. (421.5,133) .. controls (427.3,133) and (432,137.7) .. (432,143.5) .. controls (432,149.3) and (427.3,154) .. (421.5,154) .. controls (415.7,154) and (411,149.3) .. (411,143.5) -- cycle ;
\draw  [fill={rgb, 255:red, 0; green, 255; blue, 118 }  ,fill opacity=0.85 ] (411,199.5) .. controls (411,193.7) and (415.7,189) .. (421.5,189) .. controls (427.3,189) and (432,193.7) .. (432,199.5) .. controls (432,205.3) and (427.3,210) .. (421.5,210) .. controls (415.7,210) and (411,205.3) .. (411,199.5) -- cycle ;
\draw [fill={rgb, 255:red, 74; green, 144; blue, 226 }  ,fill opacity=0.85 ]   (343,72.5) -- (411,91.5) ;
\draw [fill={rgb, 255:red, 0; green, 255; blue, 118 }  ,fill opacity=1 ]   (343,116.5) -- (411,91.5) ;
\draw [fill={rgb, 255:red, 74; green, 144; blue, 226 }  ,fill opacity=0.85 ]   (343,160.5) -- (411,91.5) ;
\draw [fill={rgb, 255:red, 0; green, 255; blue, 118 }  ,fill opacity=1 ]   (343,116.5) -- (411,143.5) ;
\draw [fill={rgb, 255:red, 0; green, 255; blue, 118 }  ,fill opacity=0.85 ]   (343,160.5) -- (411,199.5) ;
\draw [fill={rgb, 255:red, 0; green, 255; blue, 118 }  ,fill opacity=0.85 ]   (343,208.5) -- (411,199.5) ;
\draw [fill={rgb, 255:red, 74; green, 144; blue, 226 }  ,fill opacity=0.85 ]   (343,208.5) -- (411,143.5) ;

\draw (133,64) node [anchor=north west][inner sep=0.75pt]  [font=\footnotesize] [align=left] {$\displaystyle v_{1}$};
\draw (133,108) node [anchor=north west][inner sep=0.75pt]  [font=\footnotesize] [align=left] {$\displaystyle v_{2}$};
\draw (133,152) node [anchor=north west][inner sep=0.75pt]  [font=\footnotesize] [align=left] {$\displaystyle v_{3}$};
\draw (133,200) node [anchor=north west][inner sep=0.75pt]  [font=\footnotesize] [align=left] {$\displaystyle v_{4}$};
\draw (222,83) node [anchor=north west][inner sep=0.75pt]  [font=\footnotesize] [align=left] {$\displaystyle h_{1}$};
\draw (222,135) node [anchor=north west][inner sep=0.75pt]  [font=\footnotesize] [align=left] {$\displaystyle h_{2}$};
\draw (222,191) node [anchor=north west][inner sep=0.75pt]  [font=\footnotesize] [align=left] {$\displaystyle h_{3}$};
\draw (325,65) node [anchor=north west][inner sep=0.75pt]  [font=\footnotesize] [align=left] {$\displaystyle v_{1}$};
\draw (325,109) node [anchor=north west][inner sep=0.75pt]  [font=\footnotesize] [align=left] {$\displaystyle v_{2}$};
\draw (325,153) node [anchor=north west][inner sep=0.75pt]  [font=\footnotesize] [align=left] {$\displaystyle v_{3}$};
\draw (325,201) node [anchor=north west][inner sep=0.75pt]  [font=\footnotesize] [align=left] {$\displaystyle v_{4}$};
\draw (414,84) node [anchor=north west][inner sep=0.75pt]  [font=\footnotesize] [align=left] {$\displaystyle h_{1}$};
\draw (414,136) node [anchor=north west][inner sep=0.75pt]  [font=\footnotesize] [align=left] {$\displaystyle h_{2}$};
\draw (414,192) node [anchor=north west][inner sep=0.75pt]  [font=\footnotesize] [align=left] {$\displaystyle h_{3}$};

\end{tikzpicture}

    \caption{An improved zero forcing set on the incidence graph of the hypergraph in Figure~\ref{fig:hypergraph-ex}.}
    \label{fig:smaller-zero-forcing-set}
\end{figure}
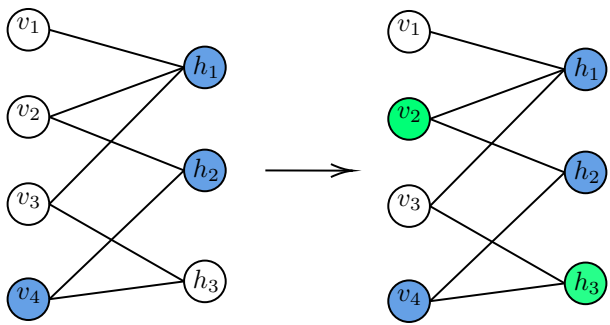

The above observation leads to an improved relation between the lazy burning number on a hypergraph and the zero forcing number of its incidence graph.
For this, the connected components of a hypergraph need to be considered with some observations.
Let $H$ be a hypergraph with $k$ connected components $\{H_i\}_{1\leq i\leq k}$ and let $V(H_i)$ and $E(H_i)$ be the vertex set and hyperedge set of $H_i$, respectively. 
Let $B_i$ be a lazy burning set of $H_i$. 
Since all the connected component are maximal connected sub hypergraphs of $H$, there is no path between the vertices in $V(H_i)$ and $V(H_j)$ for $1\leq i < j \leq k$, we have $B_i\cap B_j = \varnothing$.
Therefore, $b_L(H) = \sum_{i=1}^k b_L(H_i)$.
This means that we can sum the lazy burning numbers of the connected components of a hypergraph to obtain the lazy burning number.
Additionally, by Theorem~\ref{thm:connected components zero forcing number}, the zero forcing number of a hypergraph is equal to the sum of the zero forcing numbers of its connected components.
Considering the above, the bound in Theorem~\ref{thm:hypergraph bound} can be refined as follows.

\begin{theorem}
    Let $H$ be a hypergraph with $k$ connected components that contain at least one non-singleton hyperedge. Then
    \begin{equation}
        Z(IG(H)) \leq b_L(H) + |E(H)| - k.
        \label{eq:component hypergraph inequality}
    \end{equation}
    \label{thm:improved bound}
\end{theorem}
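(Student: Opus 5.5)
The plan is to work component by component. We save one vertex of the zero forcing set of Theorem~\ref{thm:hypergraph bound} in every component that contains a non-singleton hyperedge, and then add up over the components. Write the components of $H$ as $H_1,\dots,H_m$, where $H_1,\dots,H_k$ are the ones containing at least one non-singleton hyperedge. The incidence graph $IG(H)$ is the disjoint union of the graphs $IG(H_i)$: an edge of $IG(H)$ joins a vertex to a hyperedge containing it, so no edge of $IG(H)$ runs between different components of $H$. (If $E(H)$ contains the empty set, it is an isolated vertex of $IG(H)$, and it can be treated as its own trivial piece contributing $1$ to both sides.) By Theorem~\ref{thm:connected components zero forcing number}, $Z(IG(H))=\sum_i Z(IG(H_i))$. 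As observed before the statement, $b_L(H)=\sum_i b_L(H_i)$, and clearly $|E(H)|=\sum_i |E(H_i)|$. So it suffices to prove two bounds:
\[
Z(IG(H_i))\le b_L(H_i)+|E(H_i)|-1 \quad\text{for } i\le k,
\qquad
Z(IG(H_i))\le b_L(H_i)+|E(H_i)| \quad\text{for } i>k.
\]
The second bound is exactly Theorem~\ref{thm:hypergraph bound} applied to $H_i$.

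Now fix $i\le k$, and let $B_i$ be an optimal lazy burning set of $H_i$. First, $B_i\neq\varnothing$. Indeed, with nothing burned initially, no vertex can ever burn: a vertex burns only via a non-singleton hyperedge all of whose other vertices (at least one of them) are already burned. Second, pick $v\in B_i$. Since $H_i$ is connected and has a non-singleton hyperedge, it has at least two vertices, so $v$ lies in some hyperedge $h\in E(H_i)$. Otherwise $v$ would be isolated and would form a component by itself.

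The claim is that $S_i=B_i\cup (E(H_i)\setminus\{h\})$ is a zero forcing set of $IG(H_i)$. In $IG(H_i)$ every neighbour of $v$ is a hyperedge containing $v$. All of these are blue in $S_i$ except $h$, which is white. So $v$ is blue with exactly one white neighbour, and the first step $v\to h$ is legal. After this step the blue set is $B_i\cup E(H_i)$. By Theorem~\ref{thm:hypergraph bound} this set is a zero forcing set of $IG(H_i)$, because $B_i$ is a lazy burning set. Hence $Z(IG(H_i))\le |S_i| = b_L(H_i)+|E(H_i)|-1$. This works since $h\notin B_i$, as $B_i\subseteq V(H_i)$.

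Summing over all components gives
\[
Z(IG(H))\le b_L(H)+|E(H)|-k .
\]
The main point requiring care is the existence of the vertex $v\in B_i$ with an incident hyperedge, which uses both the nonemptiness of $B_i$ and connectivity. The other thing to check is that the decomposition of $IG(H)$ into components matches that of $H$. This holds up to edge cases such as empty hyperedges, which only add equal amounts to both sides.
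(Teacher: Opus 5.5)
Your proposal is correct and follows essentially the same route as the paper: you decompose into components, drop one hyperedge $h\ni v$ for some $v\in B_i$ so that the first force $v\to h$ recovers the zero forcing set $B_i\cup E(H_i)$ from Theorem~\ref{thm:hypergraph bound}, and then sum over components. The paper also treats a separate case $B_i=\emptyset$, where a singleton hyperedge forces $v$ first; you correctly observe that this case cannot occur under the stated rule that only non-singleton hyperedges propagate burning.
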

\begin{proof}
    First, we show that $Z(IG(H_i)) \leq b_L(H_i) + |E(H_i)| - 1$ for each of the $k$ connected components $\{H_i\}_{1\leq i\leq k}$ containing at least one non-singleton hyperedge, which is done with proof by contradiction. Then, the relation is shown for the whole hypergraph.
    So, let $\{H_i\}_{1\leq i\leq l}$ be the set of all the $l\geq k$ connected components of $H$ such that $H_i$ for $1\leq i \leq k$ contains at least one non-singleton hyperedge and that $H_i$ for $k< i\leq l$ does contain none or only singleton hyperedges.

\begin{description}
    \item[Case that $B_i\neq \emptyset$.] Take any $v\in B_i$ and take $h\in E(H_i)$ such that $v\in h$ and $h$ is non-singleton. Such $v$ and $h$ exist since $H_i$ is connected and contains at least one non-singleton hyperedge. 
    Earlier, it was shown that $B_i\cup E(H_i)$ was a zero forcing set for $IG(H_i)$. 
    Now, consider the set $B_i\cup E(H_i) \backslash \{h\}$ and we claim that it is a zero forcing set in $IG(H_i)$. 
    Since all the vertices of $IG(H_i)$ that are in $E(H_i)$ apart from $h$ are colored, and $v$ is connected in $IG(H_i)$ to vertices in $E(H_i)$, in the first round the force $v\rightarrow h$ will occur as $h$ is the only uncolored neighbor of $v$. 
    Then at least the set $B_i\cup E(H_i)$ is colored after the first round, which means that the set $B_i\cup E(H_i) \backslash \{h\}$ will color $V(IG(H_i))$.
    With this, it can be concluded that $B_i\cup E(H_i) \backslash \{h\}$ is indeed a zero forcing set for $IG(H_i)$.
    \item[Case $B_i=\emptyset$.] This case is only possible if there is at least one singleton hyperedge $h\in E(H_i)$. Then consider the vertex $v\in h$. Since $H_i$ is connected and there is at least one non-singleton hyperedge, there is a non-singleton hyperedge $h'\in E(H_i)$ such that $v\in h'$. 
    Then consider the set $E(H_i)\backslash\{h'\}$ (as $B_i=\emptyset$). In the first round of the zero forcing process on $IG(H_i)$, the force $h\rightarrow v$ will occur and in the second round the force $v\rightarrow h'$ will occur.
    Therefore, at least $E(H_i)$ will be colored after the second round of the zero forcing process and hence $E(H_i)\backslash \{h'\}$ is a zero forcing set.
    
    From this, the following inequality is obtained for $1\leq i \leq k$:
    
    \[Z(IG(H_i)) \leq b_L(H_i) + |E(H_i)| - 1.\]
    
    Additionally, inequality~\eqref{eq:hypergraph inequality} can be applied to each of the connected component $H_i$ for $k < i \leq l$ of $H$. Then, summing over all the connected components we obtain
    \begin{align*}
        Z(IG(H)) &= \sum_{i=1}^{k} Z(IG(H_i)) + \sum_{i=k+1}^l Z(IG(H_i)) \\
        &\leq \sum_{i=1}^{k} (b_L(H_i)+|E(H_i)|-1) + \sum_{i=k+1}^l (b_L(H_i)+|E(H_i)|) \\
        &= b_L(H) + |E(H)| - k.
 \end{align*} 
\end{description}
This completes the proof.
\end{proof}

The next natural question is when equality is attained for the bound in Theorem~\ref{thm:improved bound}. The following example shows that the above bound can be met with equality.

\begin{example}
    Consider the hypergraph $H$ in Figure~\ref{fig:hypergraph-equality-components} with vertex set $V(H)=\{v_1,\dots, v_7\}$ and hyperedge set $E(H)=\{h_1,h_2\}$ with $h_1=\{v_1,v_2,v_3\}$ and $h_2=\{v_4,v_5,v_6,v_7\}$. One can easily verify that $b_L(H)=5$ and $Z(IG(H))=5$. 
    Note that $|E(H)|=2$ and the number of connected components with at least one hyperedge is $k=2$. 
    Then, the bound in Theorem~\ref{thm:improved bound} holds with equality.
    \label{example:equality-hypergraph}
\end{example}

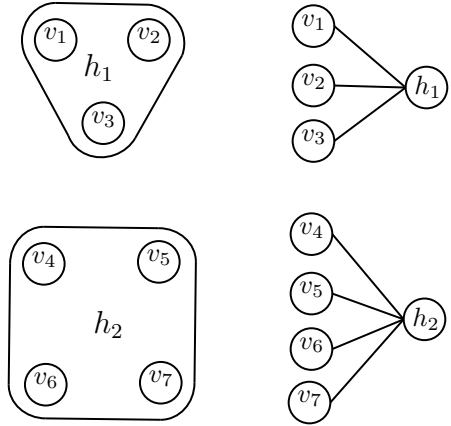
\begin{figure}[htp!]
    \centering

\tikzset{every picture/.style={line width=0.75pt}} 

\begin{tikzpicture}[x=0.75pt,y=0.75pt,yscale=-1,xscale=1]

\draw   (138,178.5) .. controls (138,172.7) and (142.7,168) .. (148.5,168) .. controls (154.3,168) and (159,172.7) .. (159,178.5) .. controls (159,184.3) and (154.3,189) .. (148.5,189) .. controls (142.7,189) and (138,184.3) .. (138,178.5) -- cycle ;
\draw   (196,177.5) .. controls (196,171.7) and (200.7,167) .. (206.5,167) .. controls (212.3,167) and (217,171.7) .. (217,177.5) .. controls (217,183.3) and (212.3,188) .. (206.5,188) .. controls (200.7,188) and (196,183.3) .. (196,177.5) -- cycle ;
\draw   (139,239.5) .. controls (139,233.7) and (143.7,229) .. (149.5,229) .. controls (155.3,229) and (160,233.7) .. (160,239.5) .. controls (160,245.3) and (155.3,250) .. (149.5,250) .. controls (143.7,250) and (139,245.3) .. (139,239.5) -- cycle ;
\draw   (197,238.5) .. controls (197,232.7) and (201.7,228) .. (207.5,228) .. controls (213.3,228) and (218,232.7) .. (218,238.5) .. controls (218,244.3) and (213.3,249) .. (207.5,249) .. controls (201.7,249) and (197,244.3) .. (197,238.5) -- cycle ;
\draw   (144,65.5) .. controls (144,59.7) and (148.7,55) .. (154.5,55) .. controls (160.3,55) and (165,59.7) .. (165,65.5) .. controls (165,71.3) and (160.3,76) .. (154.5,76) .. controls (148.7,76) and (144,71.3) .. (144,65.5) -- cycle ;
\draw   (191,65.5) .. controls (191,59.7) and (195.7,55) .. (201.5,55) .. controls (207.3,55) and (212,59.7) .. (212,65.5) .. controls (212,71.3) and (207.3,76) .. (201.5,76) .. controls (195.7,76) and (191,71.3) .. (191,65.5) -- cycle ;
\draw   (168,107.5) .. controls (168,101.7) and (172.7,97) .. (178.5,97) .. controls (184.3,97) and (189,101.7) .. (189,107.5) .. controls (189,113.3) and (184.3,118) .. (178.5,118) .. controls (172.7,118) and (168,113.3) .. (168,107.5) -- cycle ;
\draw   (274,58.5) .. controls (274,52.7) and (278.7,48) .. (284.5,48) .. controls (290.3,48) and (295,52.7) .. (295,58.5) .. controls (295,64.3) and (290.3,69) .. (284.5,69) .. controls (278.7,69) and (274,64.3) .. (274,58.5) -- cycle ;
\draw   (274,88.5) .. controls (274,82.7) and (278.7,78) .. (284.5,78) .. controls (290.3,78) and (295,82.7) .. (295,88.5) .. controls (295,94.3) and (290.3,99) .. (284.5,99) .. controls (278.7,99) and (274,94.3) .. (274,88.5) -- cycle ;
\draw   (274,116.5) .. controls (274,110.7) and (278.7,106) .. (284.5,106) .. controls (290.3,106) and (295,110.7) .. (295,116.5) .. controls (295,122.3) and (290.3,127) .. (284.5,127) .. controls (278.7,127) and (274,122.3) .. (274,116.5) -- cycle ;
\draw   (331,89.5) .. controls (331,83.7) and (335.7,79) .. (341.5,79) .. controls (347.3,79) and (352,83.7) .. (352,89.5) .. controls (352,95.3) and (347.3,100) .. (341.5,100) .. controls (335.7,100) and (331,95.3) .. (331,89.5) -- cycle ;
\draw    (295,58.5) -- (331,89.5) ;
\draw    (295,88.5) -- (331,89.5) ;
\draw    (295,116.5) -- (331,89.5) ;
\draw   (273,163.5) .. controls (273,157.7) and (277.7,153) .. (283.5,153) .. controls (289.3,153) and (294,157.7) .. (294,163.5) .. controls (294,169.3) and (289.3,174) .. (283.5,174) .. controls (277.7,174) and (273,169.3) .. (273,163.5) -- cycle ;
\draw   (273,193.5) .. controls (273,187.7) and (277.7,183) .. (283.5,183) .. controls (289.3,183) and (294,187.7) .. (294,193.5) .. controls (294,199.3) and (289.3,204) .. (283.5,204) .. controls (277.7,204) and (273,199.3) .. (273,193.5) -- cycle ;
\draw   (273,221.5) .. controls (273,215.7) and (277.7,211) .. (283.5,211) .. controls (289.3,211) and (294,215.7) .. (294,221.5) .. controls (294,227.3) and (289.3,232) .. (283.5,232) .. controls (277.7,232) and (273,227.3) .. (273,221.5) -- cycle ;
\draw   (330,206.5) .. controls (330,200.7) and (334.7,196) .. (340.5,196) .. controls (346.3,196) and (351,200.7) .. (351,206.5) .. controls (351,212.3) and (346.3,217) .. (340.5,217) .. controls (334.7,217) and (330,212.3) .. (330,206.5) -- cycle ;
\draw    (294,163.5) -- (330,206.5) ;
\draw    (294,193.5) -- (330,206.5) ;
\draw    (294,221.5) -- (330,206.5) ;
\draw   (272,248.5) .. controls (272,242.7) and (276.7,238) .. (282.5,238) .. controls (288.3,238) and (293,242.7) .. (293,248.5) .. controls (293,254.3) and (288.3,259) .. (282.5,259) .. controls (276.7,259) and (272,254.3) .. (272,248.5) -- cycle ;
\draw    (293,248.5) -- (330,206.5) ;
\draw  [draw opacity=0] (140.16,75.54) .. controls (138.47,72.64) and (137.5,69.2) .. (137.5,65.5) .. controls (137.5,55.17) and (145.11,46.8) .. (154.5,46.8) -- (154.5,65.5) -- cycle ; \draw   (140.16,75.54) .. controls (138.47,72.64) and (137.5,69.2) .. (137.5,65.5) .. controls (137.5,55.17) and (145.11,46.8) .. (154.5,46.8) ;  
\draw  [draw opacity=0] (201.5,47.71) .. controls (205.79,48.12) and (210.1,50.05) .. (213.52,53.48) .. controls (219.09,59.04) and (220.72,66.93) .. (218.25,73.31) -- (201.5,65.5) -- cycle ; \draw   (201.5,47.71) .. controls (205.79,48.12) and (210.1,50.05) .. (213.52,53.48) .. controls (219.09,59.04) and (220.72,66.93) .. (218.25,73.31) ;  
\draw    (154.5,46.8) -- (201.5,47.71) ;
\draw  [draw opacity=0] (194.62,115.02) .. controls (192.44,118.74) and (188.87,121.82) .. (184.31,123.47) .. controls (174.61,127.01) and (164.14,122.72) .. (160.93,113.9) -- (178.5,107.5) -- cycle ; \draw   (194.62,115.02) .. controls (192.44,118.74) and (188.87,121.82) .. (184.31,123.47) .. controls (174.61,127.01) and (164.14,122.72) .. (160.93,113.9) ;  
\draw    (140.16,75.54) -- (160.93,113.9) ;
\draw    (218.25,73.31) -- (194.62,115.02) ;
\draw  [draw opacity=0] (131.5,178.5) .. controls (131.5,178.5) and (131.5,178.5) .. (131.5,178.5) .. controls (131.5,168.17) and (139.11,159.8) .. (148.5,159.8) -- (148.5,178.5) -- cycle ; \draw   (131.5,178.5) .. controls (131.5,178.5) and (131.5,178.5) .. (131.5,178.5) .. controls (131.5,168.17) and (139.11,159.8) .. (148.5,159.8) ;  
\draw  [draw opacity=0] (206.5,160.5) .. controls (206.5,160.5) and (206.5,160.5) .. (206.5,160.5) .. controls (216.83,160.5) and (225.2,168.11) .. (225.2,177.5) -- (206.5,177.5) -- cycle ; \draw   (206.5,160.5) .. controls (206.5,160.5) and (206.5,160.5) .. (206.5,160.5) .. controls (216.83,160.5) and (225.2,168.11) .. (225.2,177.5) ;  
\draw    (148.5,159.8) -- (206.5,160.5) ;
\draw    (131.5,178.5) -- (130.8,239.5) ;
\draw  [draw opacity=0] (224.5,238.5) .. controls (224.5,238.5) and (224.5,238.5) .. (224.5,238.5) .. controls (224.5,248.83) and (216.89,257.2) .. (207.5,257.2) -- (207.5,238.5) -- cycle ; \draw   (224.5,238.5) .. controls (224.5,238.5) and (224.5,238.5) .. (224.5,238.5) .. controls (224.5,248.83) and (216.89,257.2) .. (207.5,257.2) ;  
\draw  [draw opacity=0] (149.5,256.5) .. controls (149.5,256.5) and (149.5,256.5) .. (149.5,256.5) .. controls (139.17,256.5) and (130.8,248.89) .. (130.8,239.5) -- (149.5,239.5) -- cycle ; \draw   (149.5,256.5) .. controls (149.5,256.5) and (149.5,256.5) .. (149.5,256.5) .. controls (139.17,256.5) and (130.8,248.89) .. (130.8,239.5) ;  
\draw    (225.2,177.5) -- (224.5,238.5) ;
\draw    (149.5,256.5) -- (207.5,257.2) ;

\draw (141,171) node [anchor=north west][inner sep=0.75pt]  [font=\footnotesize] [align=left] {$\displaystyle v_{4}$};
\draw (199,170) node [anchor=north west][inner sep=0.75pt]  [font=\footnotesize] [align=left] {$\displaystyle v_{5}$};
\draw (142,232) node [anchor=north west][inner sep=0.75pt]  [font=\footnotesize] [align=left] {$\displaystyle v_{6}$};
\draw (200,231) node [anchor=north west][inner sep=0.75pt]  [font=\footnotesize] [align=left] {$\displaystyle v_{7}$};
\draw (172,201) node [anchor=north west][inner sep=0.75pt]   [align=left] {$\displaystyle h_{2}$};
\draw (147,58) node [anchor=north west][inner sep=0.75pt]  [font=\footnotesize] [align=left] {$\displaystyle v_{1}$};
\draw (194,58) node [anchor=north west][inner sep=0.75pt]  [font=\footnotesize] [align=left] {$\displaystyle v_{2}$};
\draw (171,100) node [anchor=north west][inner sep=0.75pt]  [font=\footnotesize] [align=left] {$\displaystyle v_{3}$};
\draw (167,71) node [anchor=north west][inner sep=0.75pt]   [align=left] {$\displaystyle h_{1}$};
\draw (277,51) node [anchor=north west][inner sep=0.75pt]  [font=\footnotesize] [align=left] {$\displaystyle v_{1}$};
\draw (277,81) node [anchor=north west][inner sep=0.75pt]  [font=\footnotesize] [align=left] {$\displaystyle v_{2}$};
\draw (277,109) node [anchor=north west][inner sep=0.75pt]  [font=\footnotesize] [align=left] {$\displaystyle v_{3}$};
\draw (334,82) node [anchor=north west][inner sep=0.75pt]  [font=\footnotesize] [align=left] {$\displaystyle h_{1}$};
\draw (276,156) node [anchor=north west][inner sep=0.75pt]  [font=\footnotesize] [align=left] {$\displaystyle v_{4}$};
\draw (276,186) node [anchor=north west][inner sep=0.75pt]  [font=\footnotesize] [align=left] {$\displaystyle v_{5}$};
\draw (276,214) node [anchor=north west][inner sep=0.75pt]  [font=\footnotesize] [align=left] {$\displaystyle v_{6}$};
\draw (333,199) node [anchor=north west][inner sep=0.75pt]  [font=\footnotesize] [align=left] {$\displaystyle h_{2}$};
\draw (275,241) node [anchor=north west][inner sep=0.75pt]  [font=\footnotesize] [align=left] {$\displaystyle v_{7}$};

\end{tikzpicture}

    \caption{A hypergraph and its incidence graph that attain equality in Theorem~\ref{thm:improved bound}.}
    \label{fig:hypergraph-equality-components}
\end{figure}

The following result shows that there are infinitely many hypergraphs for which equality is attained in Theorem~\ref{thm:improved bound}. 

\begin{proposition}{}\label{propo:tightnessnewboundburningzeroforcingprocesses}
    There are infinitely many hypergraphs $H$ for which it holds that
    \[ Z(IG(H)) = b_L(H)+|E(H)|-k, \]
    where $k$ is the number of connected components of $H$ that contain at least one non-singleton hyperedge.
    \label{propo:infinite graphs}
\end{proposition}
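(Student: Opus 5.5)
Following Example~\ref{example:equality-hypergraph}, I will use the family of hypergraphs whose components each consist of a single hyperedge. Let $H_{n}$ be the hypergraph with vertex set $\{v_1,\dots,v_n\}$, $n\geq 2$, and one hyperedge $h=\{v_1,\dots,v_n\}$. Then $k=1$, $|E(H_n)|=1$, and $IG(H_n)$ is the star $K_{1,n}$ with center $h$. Since the $H_n$ are pairwise non-isomorphic, it suffices to prove $Z(K_{1,n})=b_L(H_n)$, because then $Z(IG(H_n)) = b_L(H_n)+1-1$. Alternatively, disjoint unions of such stars recover the example, but one family suffices.

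First I compute $b_L(H_n)=n-1$. Burning spreads along $h$ only when all but one vertex of $h$ is burned, and there are no other hyperedges. So a set $B$ is a lazy burning set iff $|B|\geq n-1$, and any $n-1$ vertices work.

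Second I compute $Z(K_{1,n})=n-1$. For the upper bound, color $n-1$ leaves. Each colored leaf has the uncolored center as its only neighbor and forces it. The center then has exactly one white neighbor, the remaining leaf, and forces it. For the lower bound, suppose $B$ has at most $n-2$ vertices, so at least two leaves are white. If the center is white, a colored leaf can force it, but afterwards the center has at least two white neighbors and no leaf has a white neighbor, so the process stalls. If the center is blue, it has at least two white neighbors and leaves have no white neighbors, so no force is possible. Hence $Z(K_{1,n})=n-1$. Alternatively, the lower bound follows from the standard bound $Z(G)\geq \delta$ or from the known value $Z(K_{1,n})=n-1$.

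Combining the two computations gives equality in Theorem~\ref{thm:improved bound} for every $n\geq 2$, which yields infinitely many hypergraphs. The only step requiring care is the lower bound on $Z$, which is the small case check above. As a sanity check, Example~\ref{example:equality-hypergraph} is the disjoint union $H_3\cup H_4$: Theorem~\ref{thm:connected components zero forcing number} and additivity of $b_L$ give $Z=2+3=5=b_L+2-2$, consistent with the family.
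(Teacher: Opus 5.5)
Your proof is correct and follows the paper's approach. Like you, the paper takes components consisting of a single hyperedge on all their vertices, with $b_L = n_i - 1$ and $IG$ a star with $Z = n_i - 1$, and uses the one-hyperedge family for infinitude; your direct case check of $Z(K_{1,n})\geq n-1$ replaces the paper's citation. One small slip: the aside that $Z(G)\geq\delta$ gives this lower bound is wrong, since $\delta(K_{1,n})=1$, but your case analysis does not rely on it.
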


\begin{proof}
    Let $H$ be a hypergraph with $l$ connected components and let $\{H_i\}_{i\in[l]}$ be the set of connected components of $H$ with $V(H_i)$ and $E(H_i)$ being the vertex and edge set of $H_i$, such that $H_i$ for $i\in [l]$ contains exactly one non-singleton hyperedge (for an example, see the hypergraph in Example~\ref{example:equality-hypergraph}).
    
    For each of the connected component $H_i$, the number of vertices that it contains is denoted with $n_i=|V(H_i)|$. 
    First, consider the connected components $\{H_{i}\}_{1\leq i \leq k}$. The lazy burning number of such a component is $n_i - 1$ as there is only one hyperedge that contains all the vertices. Additionally, note that $IG(H_i)$ is a star graph with $n_i+1$ vertices. Therefore, it holds that $Z(IG(H_i))=n_i - 1$ (see e.g. \cite{hogben2022inverse}).     
    It can easily be verified that equality is attained in Theorem~\ref{thm:improved bound} for all connected components.

    We then obtain the following:
    \begin{align*}
        Z(IG(H)) &= \sum_{i=1}^l Z(IG(H_i))\\
        &= \sum_{i=1}^l (b_L(H_i)+|E(H_i)|-1)\\
        &= b_L(H) + |E(H)| - l.
    \end{align*}
    Infinitely many such hypergraphs exist. Indeed, consider, for instance, the  family of hypergraphs $\{H_n\}_{n\in \mathbb{N}}$ with $V(H_n)=\{ v_1,v_2,\dots, v_{n+1} \}$ and $E(H_n)=\{\{v_1,v_2,\dots, v_{n+1}\}\}$. This shows the desired result.
\end{proof}
Another hypergraph for which equality in Theorem~\ref{thm:improved bound} is attained and which is not covered by Proposition~\ref{propo:infinite graphs} is shown in the following example.

\begin{example}
    Consider the hypergraph $H$ in Figure~\ref{fig:hypergraph-equality}, which has vertex set $V(H)=\{v_!,v_2,v_3,v_4\}$ and hyperedge set $E(H)=\{h_1,h_2\}$ with $h_1=\{v_2,v_3,v_4\}$ and $h_2=\{v_1,v_2,v_3,v_4\}$.
    One can easily verify that $b_L(H)=2$ and that $Z(IG(H))=3$.
    This implies that we obtain an equality between the two sides in the inequality from Theorem~\ref{thm:improved bound}.
    \label{example:more-than-one-hyperedge-equality}
\end{example}
\vspace{-0.5cm}

\begin{figure}[htp!]
    \centering

\tikzset{every picture/.style={line width=0.75pt}} 

\begin{tikzpicture}[x=0.75pt,y=0.75pt,yscale=-1,xscale=1]

\draw   (125,123.5) .. controls (125,117.7) and (129.7,113) .. (135.5,113) .. controls (141.3,113) and (146,117.7) .. (146,123.5) .. controls (146,129.3) and (141.3,134) .. (135.5,134) .. controls (129.7,134) and (125,129.3) .. (125,123.5) -- cycle ;
\draw   (125,209.5) .. controls (125,203.7) and (129.7,199) .. (135.5,199) .. controls (141.3,199) and (146,203.7) .. (146,209.5) .. controls (146,215.3) and (141.3,220) .. (135.5,220) .. controls (129.7,220) and (125,215.3) .. (125,209.5) -- cycle ;
\draw   (225,209.5) .. controls (225,203.7) and (229.7,199) .. (235.5,199) .. controls (241.3,199) and (246,203.7) .. (246,209.5) .. controls (246,215.3) and (241.3,220) .. (235.5,220) .. controls (229.7,220) and (225,215.3) .. (225,209.5) -- cycle ;
\draw   (225,125.5) .. controls (225,119.7) and (229.7,115) .. (235.5,115) .. controls (241.3,115) and (246,119.7) .. (246,125.5) .. controls (246,131.3) and (241.3,136) .. (235.5,136) .. controls (229.7,136) and (225,131.3) .. (225,125.5) -- cycle ;
\draw  [draw opacity=0] (116.48,123.5) .. controls (116.48,113.67) and (124.99,105.7) .. (135.5,105.7) -- (135.5,123.5) -- cycle ; \draw   (116.48,123.5) .. controls (116.48,113.67) and (124.99,105.7) .. (135.5,105.7) ;  
\draw  [draw opacity=0] (154.52,209.5) .. controls (154.52,219.33) and (146.01,227.3) .. (135.5,227.3) .. controls (124.99,227.3) and (116.48,219.33) .. (116.48,209.5) -- (135.5,209.5) -- cycle ; \draw   (154.52,209.5) .. controls (154.52,219.33) and (146.01,227.3) .. (135.5,227.3) .. controls (124.99,227.3) and (116.48,219.33) .. (116.48,209.5) ;  
\draw    (116.48,123.5) -- (116.48,209.5) ;
\draw  [draw opacity=0] (234.5,106.12) .. controls (239.01,106.42) and (243.53,108.36) .. (247.09,111.91) .. controls (254.52,119.34) and (254.9,131) .. (247.95,137.95) .. controls (244.33,141.58) and (239.42,143.21) .. (234.5,142.88) -- (234.5,124.5) -- cycle ; \draw   (234.5,106.12) .. controls (239.01,106.42) and (243.53,108.36) .. (247.09,111.91) .. controls (254.52,119.34) and (254.9,131) .. (247.95,137.95) .. controls (244.33,141.58) and (239.42,143.21) .. (234.5,142.88) ;  
\draw    (135.5,105.7) -- (234.5,106.12) ;
\draw  [draw opacity=0] (154.48,160.5) .. controls (154.48,150.67) and (162.99,142.7) .. (173.5,142.7) -- (173.5,160.5) -- cycle ; \draw   (154.48,160.5) .. controls (154.48,150.67) and (162.99,142.7) .. (173.5,142.7) ;  
\draw    (154.48,160.5) -- (154.52,209.5) ;
\draw    (173.5,142.7) -- (234.5,142.88) ;
\draw  [draw opacity=0] (112.48,119.5) .. controls (112.48,109.67) and (120.99,101.7) .. (131.5,101.7) -- (131.5,119.5) -- cycle ; \draw   (112.48,119.5) .. controls (112.48,109.67) and (120.99,101.7) .. (131.5,101.7) ;  
\draw    (131.5,101.7) -- (238.5,102.48) ;
\draw  [draw opacity=0] (238.5,102.48) .. controls (248.33,102.48) and (256.3,110.99) .. (256.3,121.5) -- (238.5,121.5) -- cycle ; \draw   (238.5,102.48) .. controls (248.33,102.48) and (256.3,110.99) .. (256.3,121.5) ;  
\draw  [draw opacity=0] (256,213.6) .. controls (256,223.43) and (247.48,231.4) .. (236.98,231.4) -- (236.98,213.6) -- cycle ; \draw   (256,213.6) .. controls (256,223.43) and (247.48,231.4) .. (236.98,231.4) ;  
\draw    (256,213.6) -- (256.3,121.5) ;
\draw    (112.18,211.6) -- (112.48,119.5) ;
\draw  [draw opacity=0] (129.98,230.62) .. controls (120.14,230.62) and (112.18,222.11) .. (112.18,211.6) -- (129.98,211.6) -- cycle ; \draw   (129.98,230.62) .. controls (120.14,230.62) and (112.18,222.11) .. (112.18,211.6) ;  
\draw    (129.98,230.62) -- (236.98,231.4) ;

\draw (128,202) node [anchor=north west][inner sep=0.75pt]  [font=\footnotesize] [align=left] {$\displaystyle v_{3}$};
\draw (229,202) node [anchor=north west][inner sep=0.75pt]  [font=\footnotesize] [align=left] {$\displaystyle v_{4}$};
\draw (128,116) node [anchor=north west][inner sep=0.75pt]  [font=\footnotesize] [align=left] {$\displaystyle v_{1}$};
\draw (229,118) node [anchor=north west][inner sep=0.75pt]  [font=\footnotesize] [align=left] {$\displaystyle v_{2}$};
\draw (176,114) node [anchor=north west][inner sep=0.75pt]   [align=left] {$\displaystyle h_{1}$};
\draw (195,181) node [anchor=north west][inner sep=0.75pt]   [align=left] {$\displaystyle h_{2}$};

\end{tikzpicture}
    \caption{A hypergraph that attains equality in Theorem~\ref{thm:improved bound}.}
    \label{fig:hypergraph-equality}
\end{figure}
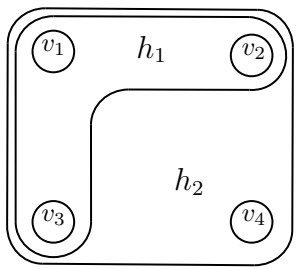

\section{The burning number of a graph does not follow from the spectrum}\label{sec:cospectralburning}

 While it is known that the zero forcing number is not determined by the graph adjacency spectrum \cite{aida-zero-forcing}, this problem remained open for other processes such as graph burning. The goal of this section is to establish this.

Before we can state the main result of this section, we recall two graph products which we will use for the construction of cospectral graph classes with different burning numbers.  The \textit{Cartesian product} of two graphs $G$ and $H$, denoted as $G\square H$, is the graph with vertex set $V(G)\times V(H)$ where two vertices $(u_1,v_1), (u_2,v_2)\in V(G)\times V(H)$ are adjacent if and only if:
    \begin{itemize}
        \item[1.] $u_1=u_2$ and $\{v_1,v_2\}\in E(H)$ or\\ \vspace{-.6cm}
        \item[2.] $v_1=v_2$ and $\{u_1,u_2\}\in E(G)$.
    \end{itemize}

The \textit{strong product} of two graphs $G$ and $H$, denoted as  $G\boxtimes H$, is the graph with vertex set $V(G)\times V(H)$ where two vertices $(u_1,v_1), (u_2,v_2)\in V(G)\times V(H)$ are adjacent if and only if:
    \begin{itemize}
        \item[1.] $\{v_1,v_2\}\in E(H)$ and $u_1=u_2$ or\\ \vspace{-.6cm}
        \item[2.] $\{u_1,u_2\}\in E(G)$ and $v_1=v_2$ or\\ \vspace{-.6cm}
        \item[3.] $\{v_1,v_2\}\in E(H)$ and $\{u_1,u_2\}\in E(G)$.
    \end{itemize}

For graph products, it is sometimes useful to know how the resulting adjacency matrix looks like. It is known that the adjacency matrix of the strong product of two graphs can be obtained from the adjacency matrices of the two original matrices by using the matrix Kronecker product.

    Let $A$ be an $m\times n$-matrix and $B$ be a $p\times q$-matrix, then the \textit{Kronecker product} $A\otimes B$ is the $mp\times nq$-matrix defined by
    \begin{align*}
        A\otimes B = \begin{bmatrix}
            a_{11}b_{11} & \dots & a_{11}b_{1q} & \dots & a_{1n}b_{11}& \dots & a_{1n}b_{1q}\\
            \vdots & \ddots & \vdots & \ddots & \vdots & \ddots & \vdots \\
            a_{11}b_{p1} & \dots & a_{11}b_{pq} & \dots & a_{1n}b_{p1}& \dots & a_{1n}b_{pq}\\
            \vdots & \ddots & \vdots & \ddots & \vdots & \ddots & \vdots \\
            a_{m1}b_{11} & \dots & a_{m1}b_{1q} & \dots & a_{mn}b_{11}& \dots & a_{mn}b_{1q}\\
            \vdots & \ddots & \vdots & \ddots & \vdots & \ddots & \vdots \\
            a_{m1}B_{p1} & \dots & a_{m1}b_{pq} & \dots & a_{mn}b_{p1} & \dots & a_{mn}b_{pq}
        \end{bmatrix}.
    \end{align*}

\begin{proposition}\cite{adjacnecy-matrix-strong-product}
    Let $G=(V(G),E(G))$ and $H=(V(H),E(H))$ be two graphs, and let $A_G$ and $A_H$ be their adjacency matrices, respectively. Then
    \begin{equation*}
        ((A_G+I)\otimes (A_H+I))-I
    \end{equation*}
    is the adjacency matrix of $G\boxtimes H$, where $I$ is the identity matrix.
    \label{prop:adjacency-matrix-strong-product}
\end{proposition}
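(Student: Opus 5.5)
We need to prove that $((A_G+I)\otimes(A_H+I))-I$ is the adjacency matrix of $G\boxtimes H$. The approach is a direct entrywise comparison, using the ordering of $V(G)\times V(H)$ that the Kronecker product induces. We index the rows and columns of $(A_G+I)\otimes(A_H+I)$ by pairs $(u,v)\in V(G)\times V(H)$, ordered lexicographically with the $G$-coordinate first, as the block structure of the Kronecker product prescribes. By the definition of the Kronecker product recalled above, the entry in row $(u_1,v_1)$ and column $(u_2,v_2)$ is
\[
\bigl((A_G+I)\otimes(A_H+I)\bigr)_{(u_1,v_1),(u_2,v_2)} = (A_G+I)_{u_1u_2}\,(A_H+I)_{v_1v_2}.
\]
Fixing this indexing convention carefully is the only point requiring attention: block $(u_1,u_2)$ of the product is $(A_G+I)_{u_1u_2}(A_H+I)$, and inside that block position $(v_1,v_2)$ is picked out.

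Next, we use that $G$ and $H$ are simple, so their adjacency matrices have zero diagonals. Hence $(A_G+I)_{u_1u_2}$ is $1$ exactly when $u_1=u_2$ or $\{u_1,u_2\}\in E(G)$, and $0$ otherwise. The analogous statement holds for $H$. Write $u_1\simeq u_2$ for ``equal or adjacent''. Since both factors are $0/1$, the product entry equals $1$ if and only if $u_1\simeq u_2$ in $G$ and $v_1\simeq v_2$ in $H$.

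We now split into two cases.

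\emph{Diagonal entries.} If $(u_1,v_1)=(u_2,v_2)$, the product entry is $1\cdot 1=1$. Subtracting $I$ gives $0$, which matches the absence of loops in $G\boxtimes H$.

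\emph{Off-diagonal entries.} If $(u_1,v_1)\neq(u_2,v_2)$, then $I$ contributes nothing. The condition ``$u_1\simeq u_2$ and $v_1\simeq v_2$, with the pairs not both equal'' splits into exactly three mutually exclusive subcases:
\begin{itemize}
\item $u_1=u_2$ and $\{v_1,v_2\}\in E(H)$;
\item $\{u_1,u_2\}\in E(G)$ and $v_1=v_2$;
\item both $\{u_1,u_2\}\in E(G)$ and $\{v_1,v_2\}\in E(H)$.
\end{itemize}
These are precisely conditions 1--3 in the definition of the strong product.

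Therefore $((A_G+I)\otimes(A_H+I))-I$ is a symmetric $0/1$ matrix with zero diagonal, and its $1$-entries are exactly the edges of $G\boxtimes H$. This is the claimed equality. No step is a genuine obstacle; the only thing to keep straight is the indexing convention.
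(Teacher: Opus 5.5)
Your proof is correct. Under the lexicographic ordering, the entry $\bigl((A_G+I)\otimes(A_H+I)\bigr)_{(u_1,v_1),(u_2,v_2)}$ equals $(A_G+I)_{u_1u_2}(A_H+I)_{v_1v_2}$, and since $A_G$ and $A_H$ have zero diagonals, this yields exactly the three adjacency conditions of $G\boxtimes H$ off the diagonal and $0$ on it. The paper gives no proof of its own and only cites this result from the literature, and your direct entrywise verification is the standard argument for it.
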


Let $G$ and $H$ be two graphs and let $\sigma_G$ and $\sigma_H$ be their spectra, respectively. Of interest is the spectrum of the strong product $\sigma_{G\boxtimes H}$ of the graph $G\boxtimes H$.

\begin{lemma}\cite{adjacnecy-matrix-strong-product}
Let $G$ and $H$ be graphs with spectrum $\sigma_G$ and $\sigma_H$ respectively. Then the spectrum of the strong product $G\boxtimes H$ is given by
    \begin{equation*}
        \sigma_{G\boxtimes H}=\{(\lambda+1)(\mu+1)-1 \mid \lambda \in \sigma_G, \mu \in \sigma_H\}.
    \end{equation*}
    \label{lemma:spectrum-strong-product}
\end{lemma}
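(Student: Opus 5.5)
The plan is to realise the adjacency matrix of $G\boxtimes H$ as a polynomial in a Kronecker product of commuting matrices and then diagonalise everything simultaneously. By Proposition~\ref{prop:adjacency-matrix-strong-product}, the adjacency matrix of $G\boxtimes H$ is
\[
A_{G\boxtimes H}=(A_G+I)\otimes(A_H+I)-I .
\]
So it suffices to determine the eigenvalues of $(A_G+I)\otimes(A_H+I)$ and then subtract $1$ from each of them.

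The first step uses that $A_G$ and $A_H$ are real symmetric. Hence there are orthonormal eigenbases $x_1,\dots,x_m$ of $A_G$, with $A_Gx_i=\lambda_ix_i$, and $y_1,\dots,y_p$ of $A_H$, with $A_Hy_j=\mu_jy_j$. Then $(A_G+I)x_i=(\lambda_i+1)x_i$ and $(A_H+I)y_j=(\mu_j+1)y_j$.

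The second step applies the mixed-product rule $(A\otimes B)(x\otimes y)=(Ax)\otimes(By)$, which can be checked directly from the block definition of the Kronecker product given above. It yields
\[
(A_G+I)\otimes(A_H+I)\,(x_i\otimes y_j)=(\lambda_i+1)(\mu_j+1)\,(x_i\otimes y_j).
\]
Consequently
\[
A_{G\boxtimes H}(x_i\otimes y_j)=\bigl((\lambda_i+1)(\mu_j+1)-1\bigr)(x_i\otimes y_j).
\]

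The main point needing care is that this gives the whole spectrum with the correct multiplicities, not just some eigenvalues. For this, note that the $mp$ vectors $x_i\otimes y_j$ satisfy
\[
\langle x_i\otimes y_j,\,x_k\otimes y_l\rangle=\langle x_i,x_k\rangle\langle y_j,y_l\rangle=\delta_{ik}\delta_{jl}.
\]
They are therefore orthonormal, and since there are $mp=|V(G\boxtimes H)|$ of them, they form a basis. The matrix $A_{G\boxtimes H}$ is thus diagonalised in this basis, and its spectrum, as a multiset, is exactly
\[
\{(\lambda+1)(\mu+1)-1\mid\lambda\in\sigma_G,\ \mu\in\sigma_H\},
\]
with each pair of eigenvalues counted according to multiplicity. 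This is the claimed statement.
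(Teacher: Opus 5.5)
Your proof is correct. The paper gives no argument for this lemma and simply cites it, right after the formula $A_{G\boxtimes H}=(A_G+I)\otimes(A_H+I)-I$ of Proposition~\ref{prop:adjacency-matrix-strong-product}. Your derivation from that formula, using the mixed-product rule and the orthonormal eigenbasis $\{x_i\otimes y_j\}$, is the standard argument behind the cited result.

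Your version also proves the identity as multisets, with multiplicities. That is what Proposition~\ref{prop:remain cospectral strong product} actually needs for cospectrality in the usual sense of equal characteristic polynomials (as in Example~\ref{ex:cospectral graphs}). The paper's set-level phrasing alone would not give this.
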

\vspace{-1.1cm}

The following result is straightforward, but we add its proof for completeness.

\begin{proposition}  Consider the graphs $G_1$, $G_2$ and $H$. If $G_1$ and $G_2$ are cospectral, then $G_1\boxtimes H$ and $G_2\boxtimes H$ are also cospectral.
    \label{prop:remain cospectral strong product}
\end{proposition}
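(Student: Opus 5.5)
The plan is to read off the spectra directly from Lemma~\ref{lemma:spectrum-strong-product}, treating spectra as multisets. Cospectrality of $G_1$ and $G_2$ means that $\sigma_{G_1}=\sigma_{G_2}$ as multisets, which in particular forces $|V(G_1)|=|V(G_2)|=n$. By Lemma~\ref{lemma:spectrum-strong-product}, $\sigma_{G_j\boxtimes H}$ is the multiset of the $n\cdot|V(H)|$ values $(\lambda+1)(\mu+1)-1$, where $\lambda$ runs over $\sigma_{G_j}$ and $\mu$ over $\sigma_H$, counted with multiplicity. This multiset depends only on $\sigma_{G_j}$ and $\sigma_H$. Since $\sigma_{G_1}=\sigma_{G_2}$, the two multisets coincide, and so $G_1\boxtimes H$ and $G_2\boxtimes H$ are cospectral.

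To avoid relying on multiplicities being handled correctly in the lemma's set notation, I would also give the direct matrix argument via Proposition~\ref{prop:adjacency-matrix-strong-product}. Since $A_{G_1}$ and $A_{G_2}$ are real symmetric with the same spectrum, they are orthogonally similar: there is an orthogonal $Q$ with $A_{G_2}=QA_{G_1}Q^{T}$. Then $A_{G_2}+I=Q(A_{G_1}+I)Q^T$. Set $P=Q\otimes I$, which is orthogonal. By the mixed-product property of the Kronecker product,
\[
(A_{G_2}+I)\otimes(A_H+I)-I = P\bigl((A_{G_1}+I)\otimes(A_H+I)-I\bigr)P^{T}.
\]
Hence the adjacency matrices of $G_2\boxtimes H$ and $G_1\boxtimes H$ are similar and therefore have the same spectrum.

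There is no real obstacle here. The only point needing care is to interpret spectra with multiplicity, and the similarity argument settles that cleanly.
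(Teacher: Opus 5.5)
Your proof is correct, and your first paragraph is the paper's argument: substitute $\sigma_{G_1}=\sigma_{G_2}$ into the formula of Lemma~\ref{lemma:spectrum-strong-product}. Your supplementary similarity argument with the orthogonal matrix $P=Q\otimes I$ is also valid, and it makes the multiplicities rigorous, which the paper's set notation leaves implicit.
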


\begin{proof}
    Let $\sigma_{G_1}$, $\sigma_{G_2}$ and $\sigma_H$ denote the spectra of the graphs $G_1$, $G_2$ and $H$ respectively. Since $G_1$ and $G_2$ are cospectral, it holds that $\sigma_{G_1}=\sigma_{G_2}$. 
    Then, using Lemma~\ref{lemma:spectrum-strong-product}, $\sigma_{G_1 \boxtimes H}=\{(\lambda +1)(\mu+1)-1 \mid \lambda\in\sigma_{G_1},\mu\in\sigma_H\} = \{(\lambda +1)(\mu+1)-1 \mid \lambda\in\sigma_{G_2},\mu\in\sigma_H\} = \sigma_{G_2 \boxtimes H}$. 
    This means that $G_1\boxtimes H$ and $G_2\boxtimes H$ have the same spectrum and are hence cospectral.
\end{proof}

Although we saw in Section \ref{sec:burning} that a bound relating the zero forcing number and the lazy burning number exists, in general an inequality connecting the burning numbers of the two original graphs $G$ and $H$ to the burning numbers of the two newly constructed graphs is not sufficient to guarantee that the inequality $b(G)\neq b(H)$ will hold. Hence, one is especially interested in cases for which the burning number of the original graph is equal to the burning number of the new graph constructed using the strong product.
The next result provides such an equality (that is, preservation of the burning number) by considering the strong product of a graph with a complete graph.

\begin{proposition}
    Let $G$ be a graph with burning number $k=b(G)$. Furthermore, assume that there is an optimal burning sequence $(v_1,v_2,\dots,v_k)$ in $G$ such that $v_k$ is a redundant source. 
    Then, $b(G\boxtimes K_n)=b(G)$ for all $n\in \mathbb{N}$ where $K_n$ is the complete graph on $n$ vertices.
    \label{prop:preservation burning number strong product}
\end{proposition}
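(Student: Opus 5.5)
The plan is to compare burning on $G\boxtimes K_n$ with burning on $G$ through the distance function of the strong product. The first step is the following observation. Since $K_n$ is complete, two vertices $(u,a),(v,b)$ of $G\boxtimes K_n$ with $u\neq v$ are at distance $d_G(u,v)$, because a walk in $G$ can be lifted while freely changing the $K_n$-coordinate at each step. If $u=v$ and $a\neq b$, they are at distance $1$. Hence
\[
d_{G\boxtimes K_n}((u,a),(v,b))=\max\{d_G(u,v),[a\neq b]\},
\]
and for every $r\geq 1$ we get $N_r[(u,a)]=N_r[u]\times V(K_n)$. In all cases the projection of $N_r[(u,a)]$ onto $G$ equals $N_r[u]$.

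For $b(G\boxtimes K_n)\leq b(G)$, I fix $a\in V(K_n)$ and use the sequence $((v_1,a),\dots,(v_k,a))$, checking it against Theorem~\ref{thm:alternative burning}.
\begin{itemize}
\item Distance condition: since $(v_1,\dots,v_k)$ is a burning sequence, $d_G(v_i,v_j)\geq |i-j|$, so the $v_i$ are distinct. Therefore $d((v_i,a),(v_j,a))=d_G(v_i,v_j)\geq|i-j|$.
\item Covering: redundancy of $v_k$ is used here. A neighbour of $v_k$ is burned in round $k-1$, so $v_k$ itself is burned by round $k$ by the earlier sources. Hence $\bigcup_{l=1}^{k-1}N_{k-l}[v_l]=V(G)$. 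All radii $k-l$ with $l<k$ are at least $1$, so the product balls are full fibres, and $\bigcup_{l=1}^{k-1}N_{k-l}[(v_l,a)]=V(G)\times V(K_n)$.
\end{itemize}
This is the step I expect to be the main point. Without redundancy, the radius-$0$ ball around the last source would only cover one vertex of its fibre rather than all $n$ of them. So the whole hypothesis is used to make every needed ball have radius at least $1$.

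For $b(G\boxtimes K_n)\geq b(G)$, I take an optimal burning sequence $((u_1,a_1),\dots,(u_m,a_m))$ of $G\boxtimes K_n$ and project it to $(u_1,\dots,u_m)$ in $G$. By the projection property, $\bigcup_l N_{m-l}[u_l]\supseteq \pi\bigl(\bigcup_l N_{m-l}[(u_l,a_l)]\bigr)=V(G)$. The projected sequence may violate the distance condition, for instance through repeated vertices when $a_i\neq a_j$. So instead of applying Theorem~\ref{thm:alternative burning} directly, I would run the burning process on $G$ as follows. In round $l$, choose $u_l$ if it is still unburned, and otherwise choose an arbitrary unburned vertex, or stop if none remains. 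The burned set in round $l$ then always contains $\bigcup_{i\leq l}N_{l-i}[u_i]$, by induction on $l$ using monotonicity of spreading. Hence all of $G$ burns within $m$ rounds, so $b(G)\leq m=b(G\boxtimes K_n)$. Combining the two inequalities gives the claim for every $n\in\mathbb{N}$.
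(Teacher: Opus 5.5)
Your proof is correct. The upper bound follows the paper's argument. You use the same sequence $((v_i,a))_{i\leq k}$ and the same fibre identity $N_r[(u,a)]=N_r[u]\times V(K_n)$ for $r\geq 1$. You also use redundancy in the same way, to drop the radius-$0$ ball of the last source before invoking Theorem~\ref{thm:alternative burning}. Your formula $d((u,a),(v,b))=\max\{d_G(u,v),[a\neq b]\}$ just packages the paper's two inclusions. You only justify the lifting half; the other half, $d_{G\boxtimes K_n}\geq d_G$, comes from projecting a path to a walk in $G$ with possible stationary steps. You use it in both directions of your proof, so state it.

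The lower bound is where you genuinely differ. The paper argues by contradiction: assuming $l=b(G\boxtimes K_n)<b(G)$, it uses $n\geq 2$ to show that the first $l-1$ projected balls already cover $V(G)$. It then asserts that the projection $(v_1,\dots,v_l)$ is a burning sequence of $G$. That assertion skips the distance condition, which can fail. Consecutive sources $(x,a),(x,b)$ with $a\neq b$ are at distance $1$ and may both appear in a burning sequence of $G\boxtimes K_n$, but they project to a repeated vertex.

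Your argument is precisely the repair that step needs. You project only the covering, then run the process on $G$, substituting an arbitrary unburned vertex whenever $u_l$ is already burned. It is also more economical and more general: it is direct, and it uses neither $n\geq 2$ nor the redundancy hypothesis. In fact it shows $b(G)\leq b(G\boxtimes H)$ for every graph $H$, since projection never increases distance in a strong product. The paper's route yields the extra fact that the last projected source is superfluous when $n\geq 2$, but nothing in the proof needs it.
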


\begin{proof} 
    If $n=1$, the graphs $G$ and $G\boxtimes K_1$ are isomorphic and therefore have the same burning number. Now let $n\geq 2$.
    Denote the vertex set of $K_n$ as $V(K_n) = \{u_1,u_2,\dots, u_n\}$.
    Let $S_G=(v_1,v_2,\dots,v_k)$ be an optimal burning sequence where $v_k$ is a redundant source.
    First, $b(G\boxtimes K_n)\leq b(G)$ is shown by proving that $S_{G\boxtimes H}=(w_1,w_2,\dots, w_k)$ is a burning sequence in $G\boxtimes K_n$ where $w_i=(v_i,u_1)$. 
    Since $S_G$ is a burning sequence of $G$ and $v_k$ is a redundant source, it holds that $\cup_{j=1}^{k-1}N_{k-j}[v_j]=V(G)$.
    We will now show that $N_j[(v_i,u_{m})] = N_j[v_i]\times V(K_n)$ for all $i\in [k-1]$, $m\in [n]$ and $j\geq 1$. Let $i\in [k-1]$, $m\in [n]$ and $j\geq 1$. 
    \begin{description}
        \item[Case $\subseteq$:] Let $w=(v,u)\in N_j[(v_i,u_m)]$. Take the induced path \linebreak $P=((v_{i_1},u_{m_1}),(v_{i_2},u_{m_2}),\dots,(v_{i_l},u_{m_l}))$
        in $G\boxtimes K_n$ of length $l\leq j$, where \linebreak $(v_{i_1},u_{m_1})=(v,u)$ and $(v_{i_l},u_{m_l})=(v_i,u_m)$. 
        Then, by definition of the strong product, the induced path $P=(v_{i_1},v_{i_2},\dots,v_{i_l})$
        is a path in $G$ between $v$ and $v_i$ of length $l\leq j$ and hence $v\in N[v_i]$.
        \item[Case $\supseteq$:] Let $w=(v,u)\in N_j[v_i]\times V(K_n)$. Then take the induced path \linebreak $P=(v_{i_1},v_{i_2},\dots,v_{i_l})$
        of length $l\leq j$ in $G$ with $v_{i_1}=v$ and $v_{i_l}=v_i$. Then note that the induced path $P=((v_{i_1},u),(v_{i_2},u_m),\dots,(v_{i_l},u_m))$
        is a path in $G\boxtimes K_n$ of length $l\leq j$ and hence $w\in N_j[(v_i,u_m)]$. 
    \end{description}
    With this, the following chain of equalities is obtained:
    \begin{align*}
        \cup_{j=1}^{k-1}N_{k-j}[(v_j,u_{1})] &= \cup_{j=1}^{k-1}\left( N_{k-j}[v_j]\times V(K_n) \right)\\
        &= \left( \cup_{j=1}^{k-1}N_{k-j}[v_j] \right) \times V(K_n) \\
        &= V(G) \times V(K_n) = V(G\boxtimes K_n).
    \end{align*}
    This shows that all the vertices of $G\boxtimes K_n$ get burned by this sequence and that $w_k$ is a redundant source in $S_{G\boxtimes K_n}$. It remains to show that $d(w_i,w_j)\geq |i-j|$ for all $i,j\in [k]$. Suppose that there are $i,j\in [k]$ such that $d(w_i,w_j)=l< |i-j|$. 
    Take an induced path $P=((v_{i_1},u_{m_1}),(v_{i_2},u_{m_2}),\dots,(v_{i_l},u_{m_l}))$
    of length $l < |i-j|$ in $G\boxtimes K_n$ where $(v_{i_1},u_{m_1})=w_i$ and $(v_{i_l},u_{m_l})=w_j$. 
    Then the induced path $P=(v_{i_1},v_{i_2},\dots,v_{i_l})$ is a path in $G$ of length $l < |i-j|$ between $v_i$ and $v_j$ which means that $S_G$ is not a burning sequence of $G$ which is a contradiction.
    According to Theorem~\ref{thm:alternative burning}, $S_{G\boxtimes K_n}$ is a burning sequence of $G\boxtimes K_n$. 
    
    Now, suppose that $b(G\boxtimes K_n) < b(G)$. Then take an optimal burning sequence \linebreak $S_{G\boxtimes K_n} = ((v_1,u_{m_1}),(v_2,u_{m_2}),\dots,(v_l,u_{m_l}))$ of length $l= b(G\boxtimes K_n)$ in $G\boxtimes K_n$. Now note that
    \begin{align*}
        V(G)\times V(K_n) &= V(G\boxtimes K_n)\\
        &= \cup_{j=1}^l N_{l-j}[(v_j,u_{m_j})]\\
        &= \left( \cup_{j=1}^{l-1} N_{l-j}[(v_j,u_{m_j})]\right) \cup (v_l,u_{m_l})\\
        &= \left( \left( \cup_{j=1}^{l-1}N_{l-j}[v_j] \right) \times V(K_n) \right) \cup (v_l,u_{m_l}).
    \end{align*}
    Since $n\geq 2$, it must hold that $\cup_{j=1}^{l-1}N_{l-j}[v_j] = V(G)$ and hence the sequence $S_{G}' = (v_1,v_2,\dots,v_l)$ is a burning sequence of $G$ of length $l<k$, which contradicts the assumption that $S_G$ was an optimal burning sequence of $G$. So we therefore conclude that $b(G\boxtimes K_n)=b(G)$.
\end{proof}

For some graphs, there may not exist an optimal burning sequence that ends in a redundant source. 
Take, for example, a path graph $P_n$ where $n$ is a square number, i.e., $n=m^2$ for some $m\in \mathbb{N}$. 
We know that its burning number is $b(P_n) = m$, so take any optimal burning sequence $S$ of $P_n$ of length $m$. Suppose that the last source is redundant. Then a similar argument can be made as to derive the burning number of a path and at most $m^2 - 1$ vertices are burned.
This means that $S$ cannot be a burning sequence as $P_n$ contains $n=m^2$ vertices.
So $S$ cannot end with a redundant source.
In some graphs, there are optimal burning sequences that do and do not end with a source that is redundant.
The graph in Figure~\ref{fig:graph burning} is such a graph.
The burning sequence $(v_2,v_5,v_8)$ ends with a redundant source and the burning sequence $(v_6,v_1,v_5)$ does not end with a redundant source.
Finally, there are graphs in which all optimal burning sequences end with a redundant source.
Take the complete graphs $K_n$ with $n\geq 2$. In the first round, a source is chosen and since it is a complete graph, all other vertices will get burned in the second round. Hence, any source that is chosen in the second round will be redundant.

At this point, we have all the preliminary results needed to construct an infinite family of cospectral graphs with different burning numbers. Finally, it only remains to show the existence of such cospectral graph pairs.

\begin{example}
    Consider the graphs $G$ and $H$ in Figure~\ref{fig:cospectral graphs}. One can verify that $b(G)=2$ and $b(H)=3$. An optimal burning sequence for $G$ would be $(v_3,v_1)$ and for $H$ would be $(v_3,v_1,v_6)$ - note that the last source of both sequences is redundant.
Additionally, the characteristic polynomial of the adjacency matrix of both graphs is $p(\lambda) = \lambda^6-7\lambda^4-4\lambda^3+7\lambda^2+4\lambda-1$. The spectrum of a graph are all the roots of its characteristic polynomial, $p(\lambda)=0$. Since both graphs have the same characteristic polynomial, their spectrum is the same. Therefore, $G$ and $H$ are cospectral. These graphs are not isomorphic as the non-increasing sequence of the degrees of the vertices of $G$ is $(5,2,2,2,2,1)$ and of $H$ is $(3,3,3,3,1,1)$, which are not the same.
    \label{ex:cospectral graphs}
\end{example}

\begin{figure}[htp!]
    \centering

\tikzset{every picture/.style={line width=0.75pt}} 

\begin{tikzpicture}[x=0.75pt,y=0.75pt,yscale=-1,xscale=1]

\draw   (112,105.5) .. controls (112,99.7) and (116.7,95) .. (122.5,95) .. controls (128.3,95) and (133,99.7) .. (133,105.5) .. controls (133,111.3) and (128.3,116) .. (122.5,116) .. controls (116.7,116) and (112,111.3) .. (112,105.5) -- cycle ;
\draw [fill={rgb, 255:red, 74; green, 144; blue, 226 }  ,fill opacity=1 ]   (133,108.6) -- (159,120.5) ;
\draw   (112,148.5) .. controls (112,142.7) and (116.7,138) .. (122.5,138) .. controls (128.3,138) and (133,142.7) .. (133,148.5) .. controls (133,154.3) and (128.3,159) .. (122.5,159) .. controls (116.7,159) and (112,154.3) .. (112,148.5) -- cycle ;
\draw   (156,127.5) .. controls (156,121.7) and (160.7,117) .. (166.5,117) .. controls (172.3,117) and (177,121.7) .. (177,127.5) .. controls (177,133.3) and (172.3,138) .. (166.5,138) .. controls (160.7,138) and (156,133.3) .. (156,127.5) -- cycle ;
\draw [fill={rgb, 255:red, 74; green, 144; blue, 226 }  ,fill opacity=1 ]   (122.5,116) -- (122.5,138) ;
\draw [fill={rgb, 255:red, 74; green, 144; blue, 226 }  ,fill opacity=1 ]   (133,144.6) -- (157,132.6) ;
\draw   (199,108.5) .. controls (199,102.7) and (203.7,98) .. (209.5,98) .. controls (215.3,98) and (220,102.7) .. (220,108.5) .. controls (220,114.3) and (215.3,119) .. (209.5,119) .. controls (203.7,119) and (199,114.3) .. (199,108.5) -- cycle ;
\draw   (199,151.5) .. controls (199,145.7) and (203.7,141) .. (209.5,141) .. controls (215.3,141) and (220,145.7) .. (220,151.5) .. controls (220,157.3) and (215.3,162) .. (209.5,162) .. controls (203.7,162) and (199,157.3) .. (199,151.5) -- cycle ;
\draw [fill={rgb, 255:red, 74; green, 144; blue, 226 }  ,fill opacity=1 ]   (211.5,118) -- (211.5,140) ;
\draw [fill={rgb, 255:red, 74; green, 144; blue, 226 }  ,fill opacity=1 ]   (176,132.6) -- (200,146.5) ;
\draw [fill={rgb, 255:red, 74; green, 144; blue, 226 }  ,fill opacity=1 ]   (175,122.5) -- (199,110.5) ;
\draw   (156,80.5) .. controls (156,74.7) and (160.7,70) .. (166.5,70) .. controls (172.3,70) and (177,74.7) .. (177,80.5) .. controls (177,86.3) and (172.3,91) .. (166.5,91) .. controls (160.7,91) and (156,86.3) .. (156,80.5) -- cycle ;
\draw [fill={rgb, 255:red, 74; green, 144; blue, 226 }  ,fill opacity=1 ]   (166.5,91) -- (166.5,117) ;
\draw   (359,106.5) .. controls (359,100.7) and (363.7,96) .. (369.5,96) .. controls (375.3,96) and (380,100.7) .. (380,106.5) .. controls (380,112.3) and (375.3,117) .. (369.5,117) .. controls (363.7,117) and (359,112.3) .. (359,106.5) -- cycle ;
\draw   (359,167.5) .. controls (359,161.7) and (363.7,157) .. (369.5,157) .. controls (375.3,157) and (380,161.7) .. (380,167.5) .. controls (380,173.3) and (375.3,178) .. (369.5,178) .. controls (363.7,178) and (359,173.3) .. (359,167.5) -- cycle ;
\draw [fill={rgb, 255:red, 74; green, 144; blue, 226 }  ,fill opacity=1 ]   (380,106.5) -- (434,106.5) ;
\draw   (434,106.5) .. controls (434,100.7) and (438.7,96) .. (444.5,96) .. controls (450.3,96) and (455,100.7) .. (455,106.5) .. controls (455,112.3) and (450.3,117) .. (444.5,117) .. controls (438.7,117) and (434,112.3) .. (434,106.5) -- cycle ;
\draw   (434,167.5) .. controls (434,161.7) and (438.7,157) .. (444.5,157) .. controls (450.3,157) and (455,161.7) .. (455,167.5) .. controls (455,173.3) and (450.3,178) .. (444.5,178) .. controls (438.7,178) and (434,173.3) .. (434,167.5) -- cycle ;
\draw [fill={rgb, 255:red, 74; green, 144; blue, 226 }  ,fill opacity=1 ]   (444.5,117) -- (444.5,157) ;
\draw [fill={rgb, 255:red, 74; green, 144; blue, 226 }  ,fill opacity=1 ]   (369.5,117) -- (369.5,157) ;
\draw [fill={rgb, 255:red, 74; green, 144; blue, 226 }  ,fill opacity=1 ]   (380,167.5) -- (434,167.5) ;
\draw [fill={rgb, 255:red, 74; green, 144; blue, 226 }  ,fill opacity=1 ]   (378,160) -- (437,113) ;
\draw   (315,106.5) .. controls (315,100.7) and (319.7,96) .. (325.5,96) .. controls (331.3,96) and (336,100.7) .. (336,106.5) .. controls (336,112.3) and (331.3,117) .. (325.5,117) .. controls (319.7,117) and (315,112.3) .. (315,106.5) -- cycle ;
\draw [fill={rgb, 255:red, 74; green, 144; blue, 226 }  ,fill opacity=1 ]   (359,106.5) -- (336,106.5) ;
\draw   (478,167.5) .. controls (478,161.7) and (482.7,157) .. (488.5,157) .. controls (494.3,157) and (499,161.7) .. (499,167.5) .. controls (499,173.3) and (494.3,178) .. (488.5,178) .. controls (482.7,178) and (478,173.3) .. (478,167.5) -- cycle ;
\draw [fill={rgb, 255:red, 74; green, 144; blue, 226 }  ,fill opacity=1 ]   (478,167.5) -- (455,167.5) ;

\draw (114,100) node [anchor=north west][inner sep=0.75pt]   [align=left] {$\displaystyle v_{1}$};
\draw (114,143) node [anchor=north west][inner sep=0.75pt]   [align=left] {$\displaystyle v_{2}$};
\draw (158,123) node [anchor=north west][inner sep=0.75pt]   [align=left] {$\displaystyle v_{3}$};
\draw (201,103) node [anchor=north west][inner sep=0.75pt]   [align=left] {$\displaystyle v_{4}$};
\draw (201,145) node [anchor=north west][inner sep=0.75pt]   [align=left] {$\displaystyle v_{6}$};
\draw (158,75) node [anchor=north west][inner sep=0.75pt]   [align=left] {$\displaystyle v_{5}$};
\draw (361,101) node [anchor=north west][inner sep=0.75pt]   [align=left] {$\displaystyle v_{2}$};
\draw (361,163) node [anchor=north west][inner sep=0.75pt]   [align=left] {$\displaystyle v_{4}$};
\draw (436,100) node [anchor=north west][inner sep=0.75pt]   [align=left] {$\displaystyle v_{3}$};
\draw (436,163) node [anchor=north west][inner sep=0.75pt]   [align=left] {$\displaystyle v_{5}$};
\draw (317,101) node [anchor=north west][inner sep=0.75pt]   [align=left] {$\displaystyle v_{1}$};
\draw (480,163) node [anchor=north west][inner sep=0.75pt]   [align=left] {$\displaystyle v_{6}$};

\end{tikzpicture}

    \caption{Cospectral graphs ($G$ on the left and $H$ on the right) with $b(G)=2$ and $b(H)=3$.}
    \label{fig:cospectral graphs}
\end{figure}
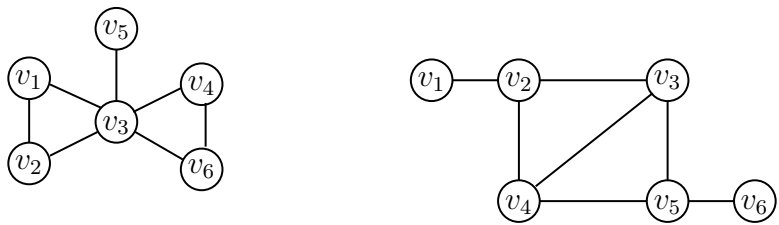

\begin{corollary}\label{coro:burningnumbernotDS}
    Let $G$ and $H$ be two cospectral graphs with respect to the adjacency matrix, such that $b(G)\neq b(H)$. Then there exist infinitely many pairs of cospectral graphs with respect to the adjacency matrix with different burning numbers.
\end{corollary}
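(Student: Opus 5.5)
The plan is to build the infinite family by taking strong products with complete graphs. Given the cospectral pair $G,H$ with $b(G)\neq b(H)$, I would consider, for every $n\in\mathbb{N}$, the pair $G\boxtimes K_n$ and $H\boxtimes K_n$. Cospectrality of each such pair follows at once from Proposition~\ref{prop:remain cospectral strong product} applied with the third graph equal to $K_n$. The pairs are pairwise distinct for different $n$, since $G\boxtimes K_n$ has $n|V(G)|$ vertices. So it only remains to show that the burning numbers still differ.

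For that I would invoke Proposition~\ref{prop:preservation burning number strong product}, which gives $b(G\boxtimes K_n)=b(G)$ and $b(H\boxtimes K_n)=b(H)$. It then follows that $b(G\boxtimes K_n)\neq b(H\boxtimes K_n)$, and in particular these graphs are non-isomorphic. The proposition requires that each of $G$ and $H$ admit an optimal burning sequence whose last source is redundant. For the concrete pair of Example~\ref{ex:cospectral graphs} this is exactly what was checked: $(v_3,v_1)$ is optimal for $G$ and $(v_3,v_1,v_6)$ is optimal for $H$, and both end in a redundant source. Hence this pair already yields the infinite family, which is the intended content of the corollary.

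The main obstacle is the corollary as literally stated for an arbitrary cospectral pair, because the redundancy hypothesis can fail (for instance for $P_{m^2}$). I would try to remove it by proving directly that $b(G\boxtimes K_n)=b(G)$ for every graph $G$. The inequality $b(G\boxtimes K_n)\le b(G)$ comes from lifting a burning sequence to the fibre $u_1$. Here the neighbourhood identity $N_j[(v,u)]=N_j[v]\times V(K_n)$ for $j\ge1$ from the proof of Proposition~\ref{prop:preservation burning number strong product} handles the non-final sources. The final source $(v_k,u_1)$ covers only itself, so the remaining vertices $(v_k,u)$ of its fibre are the delicate point. For the reverse inequality I would project a burning sequence of $G\boxtimes K_n$ onto $G$: coverage projects correctly, and $d_G\le d_{G\boxtimes K_n}$ preserves the distance condition of Theorem~\ref{thm:alternative burning}, except when two consecutive sources lie in the same fibre. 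In that case I would replace the later one by a suitable vertex, or delete it, and check that the sequence does not get longer.

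If this general argument resists, I would fall back on stating the corollary for pairs admitting redundant final sources. For example, one can first replace $G,H$ by $G\boxtimes K_2,H\boxtimes K_2$ provided one can show that these products always admit such sequences. Either way, the example pair settles the existence of infinitely many cospectral pairs with different burning numbers.
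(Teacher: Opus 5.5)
Your first two paragraphs are exactly the paper's proof. Use the pair of Example~\ref{ex:cospectral graphs}, whose optimal sequences $(v_3,v_1)$ and $(v_3,v_1,v_6)$ end in redundant sources, and apply Propositions~\ref{prop:remain cospectral strong product} and~\ref{prop:preservation burning number strong product} to $G\boxtimes K_n$ and $H\boxtimes K_n$. Your third paragraph is unnecessary, because the conclusion is purely existential and the example pair already proves the statement as written. The general identity you would aim for is also false: $b(P_4)=2$ but $b(P_4\boxtimes K_2)=3$, since a radius-$1$ ball covers at most $6$ of the $8$ vertices. For the same reason, passing to $G\boxtimes K_2$, $H\boxtimes K_2$ need not preserve $b(G)\neq b(H)$.
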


\begin{proof}
    Take two cospectral graphs $G$ and $H$ with different burning numbers that have an optimal burning sequence in which the last source is redundant. Example~\ref{ex:cospectral graphs} shows that such graphs exist. 
    Consider the graph products $G\boxtimes K_n$ and $H\boxtimes K_n$ for some $n\geq 2$, which remain cospectral by Proposition~\ref{prop:remain cospectral strong product}. Since both graphs have an optimal burning sequence that ends in a redundant source, it holds that $b(G\boxtimes K_n)=b(G)\neq b(H) = b(H\boxtimes K_n)$ by Proposition~\ref{prop:preservation burning number strong product}. 
    So, the strong product graphs are cospectral and have different burning numbers.
\end{proof}

\subsection*{Acknowledgements}
Aida Abiad is supported by NWO (Dutch Research Council) through the grant \linebreak VI.Vidi.213.085. The authors thank Antonina Khramova for inspiring discussions on this topic and for her feedback on the manuscript.

\printbibliography[heading=bibintoc]

\end{document}